\tikzset{every node/.style={font=\small}, every path/.style={line width=0.7pt}, >=stealth}
\tikzset{randomcircle/.style={circle, draw, line width=1pt, minimum size=#1, rounded corners, decorate, decoration={random steps, segment length=2mm, amplitude=0.4pt}}, randomline/.style={line width=0.7pt, rounded corners, decorate,decoration={random steps, segment length=8mm, amplitude=8pt}}, mycircle/.style={circle, draw, thick, fill=blue!10, minimum size=1cm}, myrect/.style={rectangle, draw, fill=green!20, rounded corners, minimum size=1.5cm}, myarrow/.style={->, thick, >=stealth, color=red}}
\numberwithin{equation}{subsection}
\newtheorem{thm}{Theorem}[section]
\newtheorem{lem}[thm]{Lemma}
\newtheorem{prop}[thm]{Proposition}
\newtheorem{cor}[thm]{Corollary}
\newtheorem{obs}[thm]{Observation}
\newtheorem{defn}[thm]{Definition}
\newtheorem{con}[thm]{Construction}
\newtheorem{que}[thm]{Question}
\newtheorem{rem}[thm]{Remark}
\newtheorem{mainthm}{Theorem}
\renewcommand{\themainthm}{\Alph{mainthm}}
\newcommand{\ssection}[1]{%
  \par
  \vspace{0.7\baselineskip}%
  {\normalfont\scshape\centering #1\par}%
  \vspace{0.5\baselineskip}%
}
\newcounter{supersection}
\renewcommand{\thesupersection}{\Roman{supersection}}
\newcommand{\supersection}[1]{
  \refstepcounter{supersection}
  \phantomsection
  \addcontentsline{toc}{section}{\textbf{\thesupersection\quad #1}}
  \par\bigskip
  {\Large\bfseries \thesupersection\quad #1\par}
  \medskip
}
\author{Nachi Avraham-Re'em}
\address{Department of Mathematics, Chalmers University, G\"{o}teborg, Sweden}
\email{nachi.avraham@gmail.com \(\textstyle{or}\) nachman@chalmers.se}
\author{George Peterzil}
\address{Einstein Institute of Mathematics, The Hebrew University, Israel}
\email{george.peterzil@mail.huji.ac.il}
\thanks{Supported by ISF (grant No. 1180/22) and by the Wallenberg Foundation (KAW 2021.0258).}
\dedicatory{In memory of George Abraham (1934--1984)}
\title{The Hopf decomposition of locally compact group actions}
\subjclass[2020]{37A15, 22D40, 37A40, 37A20}
\keywords{nonsingular actions, conservative dissipative decomposition}
\begin{document}

\begin{abstract}
We develop a unified approach to the classical Hopf decomposition (also known as the conservative--dissipative decomposition) for actions of locally compact second countable groups. While the decomposition is well understood for free actions of countable groups, the extension to general actions requires new techniques and structural insights, particularly concerning recurrence and transience, cocycle behavior, and the structure of stabilizers. We establish several new characterizations and prove a structure theorem for totally dissipative actions, generalizing Krengel’s classical result for flows.
\end{abstract}

\maketitle

\tableofcontents

\section{Introduction}

At the heart of this work are two complementary properties of nonsingular actions of locally compact second countable (lcsc) groups: \textbf{conservativity} and \textbf{dissipativity}. These dynamical notions play a central role in the study of nonsingular actions, though their formulation is subtle. Over the years, many authors have proposed various definitions, conditions, and characterizations of these properties — from the classical works of Poincar\'e, Hopf, Halmos, and Maharam for a single transformation (see the bibliographical notes in \cite[\S1.3]{krengel1985ergodic} and \cite[\S1.1]{aaronson1997introduction}) to those of Schmidt \cite[\S1]{schmidt1977cocycles} and Kaimanovich \cite[\S1.1]{kaimanovich1994ergodicity, kaimanovich2010hopf} for countable groups. Krengel observed that for groups admitting lattices, the decomposition is effectively determined by the lattice subactions.\footnote{Krengel defined the Hopf decomposition for flows using \(\mathbb{Z}\)-subactions (see \cite[Lemma 2.7]{krengel1969II}).} In recent years, developments in ergodic theory, homogeneous dynamics, probability, operator algebras, and related fields have increasingly involved more general acting groups. This has highlighted the need for a systematic and detailed development of a general theory.

The idea that one can define conservativity and dissipativity for general actions of lcsc groups, and decompose every nonsingular \(G\)-space into its conservative and dissipative parts, is part of the field’s folklore. One formulation of conservativity uses a recurrence condition: from every point in a positive measure set, and outside every compact subset of \(G\), there exists a group element (not in the stabilizer of the point) that returns the point to the set (see, e.g., \cite[Definition 2.1]{danilenko2022haagerup}, \cite[\S6.3]{danilenko2022nonsingular}). Another approach asserts that the set of returning group elements has infinite volume (see, e.g., \cite[\S4.6, p.40]{quint2006overview}, \cite[\S1E]{Roblin2003}). A third classical approach is Poincar\'{e} recurrence, which replaces recurrence of points with recurrence of positive measure sets. While all of these characterizations are known to be equivalent in the countable case, the theory for lcsc groups remains underdeveloped.

In this work, we develop a unified and systematic theory of the Hopf decomposition for nonsingular actions of lcsc groups — providing a continuous analogue of the classical results of Hopf, Halmos, Maharam, and Krengel, and clarifying the relationships among existing formulations in the literature. 

We first establish Hopf’s formulation in full generality (Theorem~\ref{mthm:hopfdeco}), and in the recurrence theorems~\ref{mthm:recurrence} and \ref{mthm:recurrencepoincare} we determine its precise relation to the aforementioned recurrence conditions. In the second part of the paper, we present another perspective based on smooth Borel equivalence relations (Theorem~\ref{mthm:disssmooth}). In the next part, we introduce a characterization inspired by Maharam’s classical work, which enables a direct proof of the conservativity of the Maharam extension in full generality (Theorem~\ref{mthm:maharam}). We then give a structure theorem for totally dissipative actions, which unifies Krengel’s classical theorem for totally dissipative flows with a theorem of Glimm and Effros for ergodic actions (Theorems~\ref{mthm:krengel}, \ref{mthm:glimmeffros}, \ref{mthm:generaldiss}).

Several recent works have addressed aspects of the Hopf decomposition in lcsc groups. A framework for the Hopf decomposition is presented in the auxiliary section \cite[\S4.10]{arano2021ergodic}, though the treatment is limited both in scope and in the proofs provided. The relation of the Hopf decomposition to a particular formulation of recurrence is discussed in \cite[\S2.1]{berendschot2024phase}. A formulation in infinite measure preserving actions of unimodular groups is given in \cite[\S2.11]{blayac2025patterson} and \cite[Appendix~A]{blayac2024patterson}.

\section{Technical preliminaries}

Throughout this work, \(G\) stands for a locally compact second countable (lcsc) group with identity \(e\). Our discussion includes discrete countable groups as well as compact groups, but the majority of the theory trivializes in the latter case.

Fix a left Haar measure \(\lambda\) on \(G\), with the associated modular function \(\varDelta:G\to\mathbb{R}_{>0}\), characterized by the property
\[\int_{G}\varphi\left(g\right)\varDelta\left(g\right)d\lambda\left(g\right)=\int_{G}\varphi\small(g^{-1}\small)d\lambda\left(g\right)\text{ for all Borel function }\varphi:G\to\mathbb{R}_{\geq 0}.\]

A {\bf Borel \(G\)-space} is a standard Borel space \(X\) with a Borel map \(G\times X\to X\), \(\left(g,x\right)\mapsto g.x\) such that \(e.x=x\) and \(gh.x=g.\left(h.x\right)\) for every \(g,h\in G\) and \(x\in X\). A {\bf standard measure space} is a pair \(\left(X,\mu\right)\) consisting of a standard Borel space \(X\) and a Borel \(\sigma\)-finite measure \(\mu\) on \(X\). A a Borel set \(A\subseteq X\) is {\bf \(\mu\)-null} if \(\mu\left(A\right)=0\), {\bf \(\mu\)-positive} if it is not \(\mu\)-null, and {\bf \(\mu\)-conull} if \(X\backslash A\) is \(\mu\)-null. We write {\bf \(A=B\) modulo \(\mu\)} when \(A\triangle B\), the symmetric difference, is \(\mu\)-null. We will say that a certain property {\bf essentially} occurs when it occurs on a conull set. For proofs of the following lemma see \cite[p. 17]{krengel1985ergodic} and \cite[Lemma 1.0.7]{aaronson1997introduction}.

\begin{lem}[The exhaustion lemma]
\label{lem:exhaust}
Let \(\left(X,\mu\right)\) be a standard measure space. Every \emph{hereditary} (closed to taking subsets) collection \(\mathfrak{H}\) of Borel sets in \(X\) admits a \emph{measurable union}, namely a Borel set \(\mathcal{H}\subseteq X\) such that:
\begin{enumerate}
    \item Every element of \(\mathfrak{H}\) is a subset of \(\mathcal{H}\) modulo \(\mu\).
    \item Every \(\mu\)-positive subset of \(\mathcal{H}\) contains a \(\mu\)-positive element of \(\mathfrak{H}\).
\end{enumerate}
Moreover, \(\mathcal{H}\) is the union of countably many disjoint elements of \(\mathfrak{H}\).
\end{lem}

For a pair of measures \(\nu\) and \(\mu\) on the same standard Borel space, we denote \(\nu\ll\mu\) when \(\nu\) is absolutely continuous with respect to \(\nu\), and we denote by \(\nu\sim\mu\) equivalence of measures, that is both \(\nu\ll\mu\) and \(\mu\ll\nu\).

A {\bf nonsingular \(G\)-space} is a standard measure space \(\left(X,\mu\right)\) such that \(X\) is a Borel \(G\)-space and \(\mu\) is {\bf quasi-invariant} to the action; that is,
\[\mu\circ g\sim\mu\text{ for every }g\in G,\]
where \(\mu\circ g\) is the measure \(A\mapsto\mu\left(g.A\right)\) (\(g\) defines an invertible map). When \(\mu\circ g=\mu\) for every \(g\in G\) we will call \(\left(X,\mu\right)\) a {\bf measure preserving \(G\)-space}. Two nonsingular \(G\)-spaces \(\left(X,\mu\right)\) and \(\left(Y,\nu\right)\) are {\bf isomorphic} if there are \(G\)-invariant conull sets \(X_{o}\subseteq X\) and \(Y_{o}\subseteq Y\) and a Borel bijection \(\varphi:X_{o}\to Y_{o}\), such that \(\varphi_{\ast}\mu\sim\nu\) and for every \(g\in G\) we have \(\varphi\left(g.x\right)=g.\varphi\left(x\right)\) for \(\mu\)-a.e. \(x\in X_{o}\).

A nonsingular \(G\)-space \(\left(X,\mu\right)\) carries an associated {\bf Radon--Nikodym Cocycle},
\[\nabla:G\times X\to\mathbb{R}_{>0},\quad\nabla:\left(g,x\right)\mapsto\nabla_{g}\left(x\right),\]
which is a Borel cocycle in the sense that for every \(g,h\in G\),
\[\nabla_{e}\left(x\right)=1\text{ and }\nabla_{gh}\left(x\right)=\nabla_{g}\left(h.x\right)\cdot\nabla_{h}\left(x\right)\text{ for \emph{every} }x\in X,\]
and it coincide with the Radon--Nikodym derivatives in that for every \(g\in G\),
\[\nabla_{g}\left(x\right)=\frac{d\mu\circ g}{d\mu}\left(x\right)\text{ for }\mu\text{-a.e. }x\in X.\]
The defining property of \(\nabla\) can be put in the convenient form
\begin{equation}
\label{eq:nabla}\tag{\(\dagger\)}
\begin{aligned}
&\int\nolimits_{X}\nabla_{g}\left(x\right)f_{0}\left(g.x\right)f_{1}\left(x\right)d\mu\left(x\right)=\int\nolimits_{X}f_{0}\left(x\right)f_{1}\left(g^{-1}.x\right)d\mu\left(x\right),
\end{aligned}
\end{equation}
\[\text{ for all Borel functions } f_{0},f_{1}:X\to\mathbb{R}_{\geq 0}\text{ and }g\in G.\]

The existence of a pointwise-defined cocycle \(\nabla\) realizing the Radon–Nikodym derivatives follows from the Mackey Cocycle Theorem \cite[Lemma 5.26, p. 179]{varadarajan1968geometry}.

A Borel set \(A\subseteq X\) is {\bf \(G\)-invariant} if \(G.A= A\), and {\bf \(\mu\)-almost \(G\)-invariant} if \(\mu\left(g.A\triangle A\right)=0\) for each \(g\in G\). The following important facts are well-known.\footnote{The first part is \cite[Prop.~8.3]{einsiedler2011ergodic}, together with Varadarajan’s compact model theorem \cite[Ch.~V, \S3, Thm.~5.7]{varadarajan1968geometry}. The second part is \cite[Lemma~B.8]{zimmer2013ergodic}, whose proof uses a theorem of Kallman \cite[Thm.~A.5]{zimmer2013ergodic}, now known as the Arsenin--Kunugui Uniformization Theorem \cite[Thm.~(18.18)]{kechris2012descriptive}.}

\begin{prop}
\label{prop:invariance}
Let \(\left(X,\mu\right)\) be a nonsingular \(G\)-space.
\begin{enumerate}
    \item For every \(\mu\)-almost \(G\)-invariant set \(A\subseteq X\), there is a \(G\)-invariant set \(B\subseteq X\), such that \(\mu\left(A\triangle B\right)=0\).
    \item For every Borel set \(A\subseteq X\), there is a Borel set \(B\subseteq A\) with \(\mu\left(A\backslash B\right)=0\) and \(G.B\) is a (\(G\)-invariant) Borel set.
\end{enumerate}
\end{prop}

\supersection{Two general forms of the Hopf decomposition}

\section{Hopf's formulation}

Let \(\left(X,\mu\right)\) be a nonsingular \(G\)-space. Denote by \(L_{+}^{1}\left(X,\mu\right)\) the class of (everywhere defined, strictly) positive functions in \(L^{1}\left(X,\mu\right)\), and for \(f\in L_{+}^{1}\left(X,\mu\right)\) define
\[S_{f}^{G}\left(x\right)\coloneqq\int_{G}\frac{d\mu\circ g}{d\mu}\left(x\right)f\left(g.x\right)d\lambda\left(g\right),\quad x\in X.\]
Then every such \(f\) yields a decomposition of \(X\) into
\[\mathcal{C}_{f}\coloneqq \left\{ x\in X:S_{f}^{G}\left(x\right)=+\infty\right\}\text{ and }\mathcal{D}_{f}\coloneqq X\backslash\mathcal{C}_{f}.\]
These are Borel sets (see \cite[Theorem (17.25)]{kechris2012descriptive}). The following theorem, originally proved by Hopf for a single transformation, is part of the folklore of ergodic theory (cf. \cite[Theorem 4.30]{arano2021ergodic}).

\begin{mainthm}[Hopf decomposition]
\label{mthm:hopfdeco}
For every nonsingular \(G\)-space \(\left(X,\mu\right)\), there exists an essentially unique \(G\)-invariant decomposition
\[X=\mathcal{C}\sqcup\mathcal{D},\]
depending only on the measure class of \(\mu\),\footnote{Meaning that the decompositions of equivalent measures is the same modulo \(\mu\); note still that the average transform \(S_{f}^{G}\) defining the decomposition does depend on the measure itself.} with the property that
\[\mu\left(\mathcal{C}\triangle\mathcal{C}_{f}\right)=0\text{ and }\mu\left(\mathcal{D}\triangle\mathcal{D}_{f}\right)=0\text{ for every }f\in L_{+}^{1}\left(X,\mu\right).\]
In particular, \(\mathcal{C}_{f}\) and \(\mathcal{D}_{f}\) are independent of \(f\in L_{+}^{1}\left(X,\mu\right)\) modulo \(\mu\).
\end{mainthm}

Following the standard terminology (see the introduction of \cite{danilenko2023ergodic}), a nonsingular \(G\)-space \(\left(X,\mu\right)\) with the Hopf decomposition \(X=\mathcal{C}\sqcup\mathcal{D}\) is called:
\begin{description}
    \item[\(\bullet\) conservative] if \(\mu\left(\mathcal{D}\right)=0\);
    \item[\(\bullet\) dissipative] if \(\mu\left(\mathcal{D}\right)>0\); and
    \item[\(\bullet\) totally dissipative \textnormal{(or} completely dissipative\textnormal{)}] if \(\mu\left(\mathcal{C}\right)=0\).
\end{description}

\section{The recurrence theorem}

We now move to the second form of the Hopf decomposition, as a \emph{recurrence} property. Let us first establish useful notation: For a Borel set \(A\subseteq X\) denote
\[R_{A}\left(x\right)\coloneqq \left\{g\in G:x\in g.A\right\},\quad x\in X.\]
A set \(A\) should be considered as recurrent (in a pointwise manner) when \(R_{A}\left(x\right)\) is large in \(G\) for every \(x\in A\). As we mentioned previously, there are two natural ways to formulate this property: A Borel set \(A\subseteq X\) is:
\begin{description}
    \item[\(\bullet\) recurrent] if \(R_{A}\left(x\right)\) is not relatively compact for \(\mu\)-a.e. \(x\in A\).
    \item[\(\bullet\) Haar-recurrent] if \(\lambda\left(R_{A}\left(x\right)\right)=+\infty\) for \(\mu\)-a.e. \(x\in A\).
    \item[\(\bullet\) transient] if \(R_{A}\left(x\right)\) is relatively compact for \(\mu\)-a.e. \(x\in X\).
    \item[\(\bullet\) Haar-transient] if \(\lambda\left(R_{A}\left(x\right)\right)<+\infty\) for \(\mu\)-a.e. \(x\in X\).\footnote{In \cite[\S4.6 p.40]{quint2006overview} and \cite[\S1E]{Roblin2003} this is called a {\bf wandering set}. Since this does not fully align with the notion of a wandering set for countable groups, we prefer to use a different term.}
\end{description}

Note that transience is defined over all of \(X\), while recurrence is defined inside the reference set \(A\). Note also that a subset of a (Haar-)transient set is (Haar-)transient, thus the class of (Haar-)transient sets is \emph{hereditary}.

\begin{mainthm}[The recurrence theorem]
\label{mthm:recurrence}
Let \(\left(X,\mu\right)\) be a nonsingular \(G\)-space. The Hopf decomposition \(X=\mathcal{C}\sqcup\mathcal{D}\) (as in Theorem~\ref{mthm:hopfdeco}) is determined by:
\begin{enumerate}
    \item Every subset of \(\mathcal{C}\) is recurrent (equivalently, Haar-recurrent).
    \item Every \(\mu\)-positive subset of \(\mathcal{D}\) contains a \(\mu\)-positive transient (equivalently, Haar-transient) subset.
\end{enumerate}
\end{mainthm}

\begingroup
\footnotesize
\noindent \textbf{Note:} Theorem~\ref{mthm:recurrence} should be interpreted to mean that any of the four combinations of Haar-recurrent/recurrent and Haar-transient/transient characterizes the Hopf decomposition.
\endgroup\\

The Hopf decomposition can be described also using Poincar\'{e} Recurrence, which treats recurrence setwise. Also here, there are two natural formulations. A Borel set \(A\subseteq X\) is:
\begin{description}
    \item[\(\bullet\) Poincar\'{e} recurrent] if outside every compact set in \(G\) there is some element \(g\) such that \(\mu\left(A \cap g.A\right)>0\).
    \item[\(\bullet\) Poincar\'{e} Haar-recurrent] if \(\lambda\left(g\in G:\mu\left(A\cap g.A\right)>0\right)=+\infty\).
\end{description}

In the same way that the behaviour of the system does not distinguish between recurrence and Haar-recurrence, it likewise does not distinguish between Poincar\'{e} recurrence and Poincar\'{e} Haar-recurrence. While this does not follow directly from Theorem~\ref{mthm:recurrence}, the ideas in its proof will inform the proof of the following theorem:

\renewcommand{\themainthm}{B'}
\begin{mainthm}
\label{mthm:recurrencepoincare}
A nonsingular \(G\)-space \(\left(X,\mu\right)\) is conservative iff every \(\mu\)-positive Borel subset of \(X\) is Poincar\'{e} recurrent (equivalently, Poincar\'{e} Haar-recurrent).
\end{mainthm}
\renewcommand{\themainthm}{\Alph{mainthm}}

\ssection{Proofs}

We will prove Theorems~\ref{mthm:hopfdeco} and \ref{mthm:recurrence} simultaneously. The proof of the recurrence theorem~\ref{mthm:recurrence} given here is only to its version with Haar-recurrent and Haar-transient sets. In order to deduce that the same holds with recurrent and transient sets, as well as the Poincar\'{e} recurrence theorem~\ref{mthm:recurrencepoincare}, we will need the third general form of the Hopf decomposition, and thus the final proof of Theorem~\ref{mthm:recurrence} will be given in the end of Section~\ref{sct:finalrecurrence}.

\begin{thm}
\label{thm:recurrence}
Let \(\left(X,\mu\right)\) be a nonsingular \(G\)-space. For an arbitrary \(f\in L_{+}^{1}\left(X,\mu\right)\) and an arbitrary Borel set \(A\subseteq X\), the following are equivalent:
\begin{enumerate}
    \item \(A\subseteq\mathcal{C}_{f}\) modulo \(\mu\).
    \item For every \(\mu\)-positive set \(B\subseteq A\) it holds that
    \[\int_{G}1_{B}\left(g^{-1}.x\right)d\lambda\left(g\right)=+\infty\text{ for }\mu\text{-positively many }x\in X.\]
    \item For every \(\mu\)-positive set \(B\subseteq A\) it holds that
    \[\int_{G}1_{B}\left(g^{-1}.x\right)d\lambda\left(g\right)=+\infty\text{ for }\mu\text{-a.e. }x\in B.\]
\end{enumerate}
\end{thm}

Using the notation we introduced before, for a Borel set \(A\subseteq X\) and \(x\in X\) write
\[R_{A}\left(x\right)\coloneqq \left\{ g\in G:x\in g.A\right\},\text{ so that }\lambda\left(R_{A}\left(x\right)\right)=\int_{G}1_{A}\left(g^{-1}.x\right)d\lambda\left(g\right).\]
Observe the basic identity
\begin{equation}
\label{eq:riden}
R_{A}\left(g.x\right)=gR_{A}\left(x\right),\quad x\in X, g\in G.
\end{equation}

\begin{defn}
Let \(\mathfrak{T}\) be the hereditary collection of Haar-transient sets in \(\left(X,\mu\right)\). By the exhaustion lemma~\ref{lem:exhaust} it admits a measurable union that we denote by \(\mathcal{T}\).
\end{defn}

\begin{prop}
\label{prop:recurrence}
For every \(f\in L_{+}^{1}\left(X,\mu\right)\) it holds that \(\mu\left(\mathcal{T}\triangle\mathcal{D}_{f}\right)=0\).
\end{prop}

\begin{proof}[Proof of Proposition~\ref{prop:recurrence}]
We start by showing that \(\mu\left(\mathcal{D}_{f}\backslash\mathcal{T}\right)=0\). Let \(A\subseteq\mathcal{T}^{\complement}\) be an arbitrary Borel set and put
\[A_{0}\coloneqq \left\{x\in A:S_{f}^{G}\left(x\right)<+\infty\right\}.\]
Suppose toward a contradiction that \(A_{0}\) is \(\mu\)-positive. Passing to some \(\mu\)-positive subset \(B\subseteq A_{0}\) with \(\mu(B)<+\infty\), using the formula \eqref{eq:nabla} we obtain
\begin{align*}
&+\infty>\int_{B}S_{f}^{G}\left(x\right)d\mu\left(x\right)\\
&\qquad\quad=\iint\nolimits_{G\times X}\nabla_{g}\left(x\right)f\left(g.x\right)1_{B}\left(x\right)d\lambda\otimes\mu\left(g,x\right)\\
&\qquad\quad=\iint\nolimits_{G\times X}f\left(x\right)1_{B}\left(g^{-1}.x\right)d\lambda\otimes\mu\left(g,x\right)\\
&\qquad=\int_{X}f\left(x\right)\lambda\left(R_{B}\left(x\right)\right)d\mu\left(x\right).
\end{align*}
Since \(f\) is positive on a \(\mu\)-conull set, it follows that \(\lambda\left(R_{B}\left(x\right)\right)<+\infty\) for \(\mu\)-a.e. \(x\in X\). Thus, \(A\supseteq B\in\mathfrak{T}\) which is a contradiction to that \(A\subseteq\mathcal{T}^{\complement}\). We deduce that \(S_{f}^{G}\left(x\right)=+\infty\) for \(\mu\)-a.e. \(x\in A\), namely \(A\subseteq\mathcal{C}_{f}\). Since \(A\subseteq\mathcal{T}^{\complement}\) is an arbitrary \(\mu\)-positive set it readily follows that \(\mu\left(\mathcal{T}^{\complement}\backslash\mathcal{C}_{f}\right)=0\), hence \(\mu\left(\mathcal{D}_{f}\backslash\mathcal{T}\right)=0\).

We now show that \(\mu\left(\mathcal{T}\backslash\mathcal{D}_{f}\right)=0\). Fix an arbitrary \(T\in\mathfrak{T}\). For \(r>0\) let
\[X_{r}\coloneqq \left\{x\in X:\lambda\left(R_{T}\left(x\right)\right)\leq r\right\}\text{ and }T_{r}\coloneqq T\cap X_{r}.\]
These are Borel sets (see \cite[Theorem (17.25)]{kechris2012descriptive}). Since \(T\in\mathfrak{T}\) it follows that \(X_{r}\nearrow X\) as \(r\nearrow +\infty\) modulo \(\mu\), hence \(T_{r}\nearrow T\) as \(r\nearrow +\infty\) modulo \(\mu\).

For a fixed \(r>0\) and every \(x\in X\), using \eqref{eq:riden} and the invariance of \(\lambda\) we have
\begin{align*}
g\in R_{T_{r}}\left(x\right)
&\iff\left(g^{-1}.x\in T\right)\wedge\left(\lambda\left(R_{T}\left(g.x\right)\right)\leq r\right)\\
&\iff\left(g^{-1}.x\in T\right)\wedge\left(\lambda\left(R_{T}\left(x\right)\right)\leq r\right),
\end{align*}
hence
\[R_{T_{r}}\left(x\right)=\begin{cases}
R_{T}\left(x\right) & \lambda\left(R_{T}\left(x\right)\right)\leq r\\
\emptyset & \text{otherwise}
\end{cases},\]
so that \(\lambda\left(R_{T_{r}}\left(x\right)\right)\leq r\) for every \(x\in X\). Using the formula \eqref{eq:nabla}, for every \(r>0\),
\begin{equation}
\label{eq:maharam}
\begin{aligned}
&\int_{T_{r}}S_{f}^{G}\left(x\right)d\mu\left(x\right)\\
&\qquad=\iint\nolimits_{G\times X}\nabla_{g}\left(x\right)f\left(g.x\right)1_{T_{r}}\left(x\right)d\lambda\otimes\mu\left(g,x\right)\\
&\qquad=\iint\nolimits_{G\times X}f\left(x\right)1_{T_{r}}\left(g^{-1}.x\right)d\lambda\otimes\mu\left(g,x\right)\\
&\qquad=\int_{X}f\left(x\right)\lambda\left(R_{T_{r}}\left(x\right)\right)d\mu\left(x\right)\\
&\leq r\left\Vert f\right\Vert_{L^{1}\left(X,\mu\right)}<+\infty.
\end{aligned}
\end{equation}
It follows that \(S_{f}^{G}\left(x\right)<+\infty\) for \(\mu\)-a.e. \(x\in T_{r}\). Since \(r\) is arbitrary it follows that \(S_{f}^{G}\left(x\right)<+\infty\) for \(\mu\)-a.e. \(x\in T\). By the definition of \(\mathcal{D}_{f}\) this implies that \(\mu\left(T\backslash\mathcal{D}_{f}\right)=0\). Since \(T\in\mathfrak{T}\) is arbitrary we deduce that \(\mu\left(\mathcal{T}\backslash\mathcal{D}_{f}\right)=0\).
\end{proof}

\begin{proof}[Proof of Theorem~\ref{thm:recurrence}]
Let \(A\subseteq X\) be a \(\mu\)-positive set. By Proposition~\ref{prop:recurrence} we have \(\mu\left(A\cap\mathcal{D}_{f}\right)=0\iff\mu\left(A\cap\mathcal{T}\right)=0\). Thus, \(\mu\left(A\cap\mathcal{D}_{f}\right)=0\) iff \(A\) contains no \(\mu\)-positive Haar-transient set. This is precisely condition (2) for \(A\) as in the theorem. Thus, we have established the equivalence of conditions (1) and (2) for \(A\).

It is obvious that the failure of condition (2) for \(A\) implies the failure of condition (3) for \(A\). Let us show the converse. Thus, suppose that \(A\) is a \(\mu\)-positive set containing no Haar-transient set, and let \(B\subseteq A\) be an arbitrary \(\mu\)-positive set. Consider the set
\[T_{B}\coloneqq \left\{x\in B:\lambda\left(R_{B}\left(x\right)\right)<+\infty\right\}.\]
As the notation suggests, we claim that \(T_{B}\) is a Haar-transient set. Indeed, let \(x\in X\) be arbitrary and, as we argued in the proof of Proposition~\ref{prop:recurrence} with the identity \eqref{eq:riden}, one directly verifies that
\[R_{T_{B}}\left(x\right)=\begin{cases}
R_{B}\left(x\right) & \lambda\left(R_{B}\left(x\right)\right)<+\infty\\
\emptyset & \text{otherwise}
\end{cases},\]
so that \(\lambda\left(R_{T_{B}}\left(x\right)\right)<+\infty\) for every \(x\in T_{B}\), concluding that \(T_{B}\) is Haar-transient. As \(T_{B}\subseteq B\subseteq A\) and \(A\) contains no \(\mu\)-positive Haar-transient set, \(\lambda\left(T_{B}\right)=0\). This is precisely condition (3) for \(A\).
\end{proof}

\begin{proof}[Proof of Theorem~\ref{mthm:hopfdeco}]
Recall the identity \eqref{eq:riden} from which it follows that if \(T\) is a Haar-transient set then, for every \(g\in G\), also \(g.T\) is a Haar-transient set. This readily implies that \(\mathcal{T}\) is almost \(\mu\)-invariant. By Proposition~\ref{prop:invariance}(1) there is a \(G\)-invariant set \(\mathcal{T}'\subseteq X\) with \(\mu\left(\mathcal{T}\triangle\mathcal{T}'\right)=0\), thus \(\mathcal{T}'\) is also a measurable union of the class of Haar-transients sets. We then put
\[\mathcal{D}=\mathcal{T}'\text{ and }\mathcal{C}=X\backslash \mathcal{D}.\]
It follows from Proposition~\ref{prop:recurrence} directly that this \(G\)-invariant decomposition constitutes the Hopf decomposition of \(\left(X,\mu\right)\). The fact that this decomposition depends only on the measure class of \(\mu\) follows directly from the recurrence theorem~\ref{thm:recurrence}.
\end{proof}

\supersection{Third general form of the Hopf decomposition}

\section{Wandering sets and Borel transversals}

In the following we turn into a descriptive set-theoretic point of view. When \(G\) is countable and \(\left(X,\mu\right)\) is essentially free, the dissipative part can be presented as
\[\mathcal{D}=\bigcup\nolimits_{g\in G}g.W_{o}\text{ modulo }\mu,\]
where \(W_{o}\) is a {\bf wandering set}, which is a strong form of transient set: \(\left\{g.W_{o}:g\in G\right\}\) are pairwise disjoint (see \cite[\S1.3]{krengel1985ergodic}, \cite[\S1.1 \& \S1.6]{aaronson1997introduction}; see also \cite{shelah1982measurable, Weiss}).\footnote{In Aaronson's approach \cite[\S1.6]{aaronson1997introduction}, totally dissipative nonsingular \(G\)-spaces are measurable unions of wandering sets, hence they are essentially free (see \cite[Proposition 1.6.1]{aaronson1997introduction}). Our approach uses transient sets instead, hence allows non-essentially free nonsingular \(G\)-spaces to be totally dissipative. The example one should bear in mind is the measure preserving \(G\)-space \(\left(G/F,\#\right)\), where \(F\) is a finite subgroup of \(G\) and \(\#\) is the counting measure. It admits no wandering sets unless \(F\) is trivial, but every singleton \(gF\in G/F\) forms a transient set (cf. \cite[Remarks 19--22]{kaimanovich2010hopf}).}

When it comes to uncountable groups, of course there cannot be any \(\mu\)-positive wandering set. Nevertheless, for \(G=\mathbb{R}\) Krengel made essential use of wandering sets of zero measure which are {\bf Borel transversals}, namely a Borel set that meets the orbit of every point exactly once. He then showed in \cite[\S3]{krengel1968I}, \cite{krengel1969II}, that if \(\left(X,\mu\right)\) is a totally dissipative nonsingular \(\mathbb{R}\)-space, then it admits a Borel transversal \(W_{o}\) on which one can construct a measure \(\nu_{o}\) in such a way that \(\left(X,\mu\right)\) would be isomorphic as nonsingular \(G\)-spaces to the nonsingular \(G\)-space
\begin{equation}
\label{eq:transflow}
\left(W_{o}\times \mathbb{R},\nu_{o}\otimes\mathrm{Lebesgue}\right)\text{ with the action }t.\left(w,s\right)=\left(w,t+s\right).
\end{equation}

Recall that the {\bf orbit equivalence relation} of a Borel \(G\)-space \(X\) is
\[E_{G}^{X}\coloneqq \left\{\left(x,g.x\right)\in X\times X:x\in X,g\in G\right\}.\]
We say that \(E_{G}^{X}\) is {\bf smooth} if it admits a Borel transversal, i.e. a Borel set \(W_{o}\subseteq X\) that meets every orbit exactly once. This property has a few useful characterizations that will be mentioned in Theorem~\ref{thm:smooth} below. Nonsingular \(G\)-spaces whose orbit equivalence relation is smooth were long been studied, since the celebrated works of Glimm and Effros \cite{glimm1961locally, effros1965transformation} (see also \cite[\S5]{feldman1978orbit} and \cite[\S2.1]{zimmer2013ergodic}).

\section{Formulation of the third form}

Smoothness of the orbit equivalence relation will be shown in the upcoming theorem to be a necessary condition for dissipativity, but it is not sufficient: compactness of the stabilizers is also required to fully capture dissipativity. In some sources this was taken as the definition of dissipativity (see \cite[Definition 2.1(i)]{danilenko2022haagerup}), and in others it was observed that it is sufficient for dissipativity (see \cite[\S2.11]{blayac2025patterson}), which is the easy part of the following theorem.

\renewcommand{\themainthm}{C}
\begin{mainthm}
\label{mthm:disssmooth}
For every nonsingular \(G\)-space \(\left(X,\mu\right)\), the following are equivalent:
\begin{enumerate}
    \item \(\left(X,\mu\right)\) is totally dissipative.
    \item Modulo \(\mu\), \(E_{G}^{X}\) is smooth and all stabilizers are compact.
\end{enumerate}
\end{mainthm}
\renewcommand{\themainthm}{\Alph{mainthm}}

Since \(\mathbb{Z}^{d}\) and \(\mathbb{R}^{d}\) have no nontrivial compact subgroups, we obtain:

\begin{cor}[Krengel--Rosinski]
When \(A=\mathbb{Z}^{d}\) or \(\mathbb{R}^{d}\), a nonsingular \(A\)-space \(\left(X,\mu\right)\) is totally dissipative iff it is essentially free and \(E_{A}^{X}\) is essentially smooth.
\end{cor}

Another corollary of Theorem~\ref{mthm:disssmooth} arises from Becker's Glimm–Effros Dichotomy in orbit equivalence relations \cite[Theorem 6.5.2]{gao2008invariant}. Let \(E_{0}\) denote the tail equivalence relation on \(\{0,1\}^{\mathbb{N}}\). We use \(\sqsubset_{\mathrm{B}}\) to denote Borel reducibility between equivalence relations (see \cite[\S6]{gao2008invariant} for background and details).

\begin{cor}
Let \(\left(X,\mu\right)\) be a nonsingular \(G\)-space such that essentially all stabilizers are compact (e.g. essentially free actions). Then \(\left(X,\mu\right)\) is conservative iff \(E_{0}\sqsubset_{\mathrm{B}}E_{G}^{C}\) for every \(\mu\)-positive \(G\)-invariant set \(C\subseteq X\).
\end{cor}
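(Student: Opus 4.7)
The strategy is to combine Theorem \ref{Main theorem: dissipative smooth} with Becker's Glimm--Effros dichotomy for orbit equivalence relations, which states that $E_G^Y$ is either smooth or admits a Borel reduction from $E_0$. Theorem \ref{Main theorem: dissipative smooth} supplies the measure-theoretic meaning of smoothness (total dissipativity, under the compact stabilizer hypothesis); the dichotomy supplies the complementary side. The corollary is then essentially the translation of the Hopf decomposition into this descriptive set-theoretic language, applied to every $G$-invariant piece of $X$.

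For the forward direction, I would assume $(X,\mu)$ is conservative, fix a $\mu$-positive $G$-invariant Borel set $C\subset X$, and first observe that $(C,\mu|_C)$ inherits essentially compact stabilizers from $X$ and is itself conservative: the Hopf decomposition respects $G$-invariant subsets, so its dissipative part lies inside $\mathcal{D}$ and is null. If $E_G^C$ were smooth, Theorem \ref{Main theorem: dissipative smooth} applied to $(C,\mu|_C)$ would force it to be totally dissipative, contradicting conservativity. Hence $E_G^C$ is non-smooth and Becker's dichotomy yields $E_0\sqsubset_{\mathrm{B}} E_G^C$.

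For the converse I would argue by contrapositive. If $(X,\mu)$ is not conservative, set $C:=\mathcal{D}$, which is $G$-invariant and $\mu$-positive. Since stabilizers on $\mathcal{D}$ are essentially compact and $\mathcal{D}$ is totally dissipative, Theorem \ref{Main theorem: dissipative smooth} gives that $E_G^{\mathcal{D}}$ is smooth. A smooth Borel equivalence relation cannot admit a Borel reduction from $E_0$ (by transitivity of $\sqsubset_{\mathrm{B}}$ together with the non-smoothness of $E_0$), so $E_0\not\sqsubset_{\mathrm{B}} E_G^C$ for this $C$, contradicting the hypothesis.

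The main subtlety I anticipate is the interplay between the ``modulo $\mu$'' formulation of Theorem \ref{Main theorem: dissipative smooth} and the purely Borel content needed for Becker's dichotomy: before invoking the dichotomy one should pass to a $G$-invariant Borel subset of full measure in $C$ on which stabilizers are literally compact and smoothness is meant in the strict Borel sense. Once this null-set bookkeeping is carried out, both directions reduce to short invocations of the two cited results.
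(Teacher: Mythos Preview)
Your proposal is correct and follows essentially the same approach as the paper, which simply notes that the corollary follows from Theorem~\ref{Main theorem: dissipative smooth} together with Becker's Glimm--Effros dichotomy \cite[Theorem 6.5.2]{gao2008invariant}. Your handling of the null-set bookkeeping (passing to a $G$-invariant conull subset before invoking the dichotomy, and in the converse direction taking $C$ to be the conull $G$-invariant subset of $\mathcal{D}$ on which $E_G$ is strictly smooth) is exactly the care needed to bridge the essentially-smooth conclusion of Theorem~\ref{Main theorem: dissipative smooth} with the purely Borel statement of the dichotomy.
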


\section{Smooth orbit equivalence relations}

For every Borel \(G\)-space, the associated orbit equivalence relation \(E_{G}^{X}\) is Borel, i.e. it forms a Borel set in \(X\times X\), and each of its equivalence classes, i.e. each orbit \(G.x\coloneqq \left\{g.x:g\in G\right\}\), is Borel in \(X\) (see \cite[Exercise 3.4.6, Theorem 3.3.2]{gao2008invariant}). The following well-known theorem includes a few equivalent formulations of the notion of a \emph{smooth} Borel equivalence relation. While it can be formulated more generally, we focus solely on orbit equivalence relations induced from lcsc group actions.

\begin{thm}
\label{thm:smooth}
For every Borel \(G\)-space \(X\), the following are equivalent:
\begin{enumerate}
    \item \(E_{G}^{X}\) is \emph{smooth}: there is a Borel map \(s\) from \(X\) to some standard Borel space, such that
    \[\left(x,x'\right)\in E\iff s\left(x\right)=s\small(x'\small)\text{ for all }x,x'\in X.\] 
    \item \(E_{G}^{X}\) is \emph{countably separated}: there exists a countable collection of Borel functions \(u_{1},u_{2},\dotsc\) from \(X\) to some standard Borel space, such that
    \[\left(x,x'\right)\in E\iff \left(\forall n\in\mathbb{N},\,u_{n}\left(x\right)=u_{n}\small(x'\small)\right)\text{ for all }x,x'\in X.\]
    \item \(E_{G}^{X}\) admits a \emph{Borel selector}: a Borel function \(s:X\to X\) such that
    \[\left(x,s\left(x\right)\right)\in E\text{ and }\left(x,x'\right)\in E\iff s\left(x\right)=s\small(x'\small)\text{ for all }x,x'\in X.\]
    \item \(E_{G}^{X}\) admits a \emph{Borel transversal}: a Borel subset of \(X\) that intersects every orbit in exactly one point.
    \item The space of orbits \(X/E_{G}^{X}\) is standard Borel.\footnote{The measurable sets in \(X/E_{G}^{X}\) are collections of orbits whose union is Borel in \(X\).}
\end{enumerate}
\end{thm}

We refer to the proofs of the various parts of Theorem~\ref{thm:smooth}. The equivalence \(\left(1\right)\iff\left(2\right)\) is \cite[Proposition 5.4.4]{gao2008invariant}. The equivalence \(\left(1\right)\iff\left(3\right)\) is the most difficult part of this theorem and a version of this in the presence of a measure on \(X\) is attributed to von Neumann (see \cite[Appendix A]{zimmer2013ergodic} and in particular Theorem A.9). Without a measure this is a result of Burgess, which holds even for Polish groups (see \cite[Corollary 5.4.12]{gao2008invariant}); for a proof due to Kechris see \cite[Proposition 5.4.10 and Theorem 5.4.11]{gao2008invariant}. The equivalence \(\left(3\right)\iff\left(4\right)\) appears in the proof of \cite[Proposition 3.4.5]{gao2008invariant}. For the implication \(\left(4\right)\implies\left(5\right)\), note that whenever
\(W_{o}\) is a transversal for \(E_{G}^{X}\), the natural map \(W_{o}\to X/E_{G}^{X}\) that takes a point to its orbit is a bijection, and thus when \(W_{o}\) is a Borel set this becomes a Borel bijection inducing a standard Borel structure on \(X/E_{G}^{X}\). Finally, the implication \(\left(5\right)\implies\left(1\right)\) follows since the natural projection \(s:X\to X/E_{G}^{X}\) makes it smooth.

\bigskip

Let \(\mathbf{F}\left(G\right)\) denote the \(G\)-space of closed subsets of \(G\), where \(G\) acts by left translation. This space carries a standard Borel structure (the \emph{Effros space}; see \cite[Exercise (12.7) \& \S12.E]{kechris2012descriptive}), relative to which the following sets are Borel:
\begin{itemize}
    \item The set \(\mathbf{F}_{\mathrm{gr}}\left(G\right)\) of all closed subgroups of \(G\);
    \item The set \(\mathbf{K}\left(G\right)\) of all compact subsets of \(G\);
    \item The set \(\mathbf{K}_{\mathrm{gr}}\left(G\right)\) of all compact subgroups of \(G\).
\end{itemize}

In addition, the following theorem is satisfied by this structure of \(\mathbf{F}\left(G\right)\), and will be essential to what follows (see \cite[Theorems 3.3.2 \& 8.2.1]{gao2008invariant}). Recall that for a Borel \(G\)-space \(X\), the stabilizer of a point \(x\in X\) is the subgroup
\[G_{x}\coloneqq\left\{g\in G:g.x=x\right\}.\]

\begin{thm}[Miller, Becker--Kechris]
\label{thm:miller}
Let \(X\) be a Borel \(G\)-space. Then \(G_{x}\) is a closed subgroup for all \(x\in X\), and the map \(X\to\mathbf{F}_{\mathrm{gr}}\left(G\right)\), \(x\mapsto G_{x}\), is Borel.
\end{thm}

\begin{con}
\label{cnst:psispace}
For any Borel map \(\psi_{o}:W_{o}\to\mathbf{F}_{\mathrm{gr}}\left(G\right)\) on some standard Borel space \(W_{o}\), define a Borel \(G\)-space
\[X_{o}\coloneqq\left\{\left(w,g\psi_{o}\left(w\right)\right):w\in W_{o},g\in G\right\}\subseteq X\times F_{\mathrm{gr}}(G),\]
with the action given by
\[g.\left(w,h\psi_{o}\left(w\right)\right)=\left(w,gh\psi_{o}\left(w\right)\right),\quad g\in G,\,\left(w,h\psi_{o}\left(w\right)\right)\in X_{o}.\]
\end{con}

\begin{lem}
\label{lem:standard}
\(X_{o}\) is a (standard) Borel \(G\)-space.
\end{lem}

\begin{proof}
Consider the Borel map
\[\psi_{o}:W_{o}\times G\to W_{o}\times\mathbf{F}\left(G\right), \quad\psi_{o}\left(w,g\right)=\left(w,g\psi_{o}\left(w\right)\right).\]
Thus, \(X_{o}=\psi_{o}\left(W_{o}\times G\right)\). In order to deduce that \(X_{o}\) is a Borel subset of \(W_{o}\times\mathbf{F}\left(G\right)\), note that the fiber of every point \(\left(W_{o},g_{0}\psi_{o}\left(W_{o}\right)\right)\in W_{o}\times\mathbf{F}\left(G\right)\) under \(\psi_{o}\) is
\begin{align*}
\psi_{o}^{-1}\left(W_{o},g_{0}\psi_{o}\left(W_{o}\right)\right)
&=\left\{ \left(w,g\right):\left(w,g\psi_{o}\left(w\right)\right)=\left(W_{o},g_{0}\psi_{o}\left(W_{o}\right)\right)\right\}\\
&=\left\{ W_{o}\right\} \times\left\{ g\in G:g\psi_{o}\left(W_{o}\right)=g_{0}\psi_{o}\left(W_{o}\right)\right\}\\
&=\left\{ W_{o}\right\} \times g_{0}\psi_{o}\left(W_{o}\right).
\end{align*}
Pick any \(\sigma\)-compact Polish topology on \(W_{o}\) that induces its Borel structure (which always exists by the isomorphism theorem of standard Borel spaces), and consider the product topology of this with the given (\(\sigma\)-compact) topology of \(G\). Then with this product topology the space \(W_{o}\times G\) becomes \(\sigma\)-compact and Polish. It is also clear that \(\left\{ W_{o}\right\} \times g_{0}\psi_{o}\left(W_{o}\right)\) is closed \(W_{o}\times G\), hence \(\sigma\)-compact. We thus found that all fibers of \(\Psi\) are \(\sigma\)-compact in some Polish topology that is compatible with the Borel structure of \(W_{o}\times G\). It then follows from the Arsenin--Kunugui Theorem (see \cite[Theorem (18.18)]{kechris2012descriptive}, \cite[Theorem 7.5.1]{gao2008invariant}, \cite[Corollary A.6]{zimmer2013ergodic}) that the image of \(W_{o}\times G\) under \(\psi_{o}\), namely \(X_{o}\), is a Borel set in \(W_{o}\times\mathbf{F}\left(G\right)\).
\end{proof}

\begin{obs}
\label{obs:stab}
The stabilizer of a point \(\left(w,h\psi_{o}\left(w\right)\right)\in X_{o}\) is
\[G_{\left(w,h\psi_{o}\left(w\right)\right)}=\left\{ g\in G:gh\psi_{o}\left(w\right)=h\psi_{o}\left(w\right)\right\}=\psi_{o}\left(w\right)^{h},\]
which is also the stabilizer of \(h\psi_{o}\left(w\right)\) in the coset \(G\)-space \( G/\psi_{o}\left(w\right)\).
\end{obs}

\begin{thm}
\label{thm:smoothactions}
For every Borel \(G\)-space \(X\), the orbit equivalence relation \(E_{G}^{X}\) is smooth iff \(X\) is isomorphic to a Borel \(G\)-space as in Construction~\ref{cnst:psispace}.
\end{thm}

The following lemma is a consequence of the Ryll--Nardzewski Selection Theorem.

\begin{lem}
\label{lem:grpselect}
Let \(X\) be a Borel \(G\)-space such that \(E_{G}^{X}\) is smooth with a Borel transversal \(W_{o}\). There exists a Borel map \(r:X\to G\) such that \(r\left(x\right).x\in W_{o}\) for every \(x\in X\), and the map
\[s:X\to W_{o},\quad s\left(x\right)\coloneqq r\left(x\right).x,\]
is a Borel selector. Such a map \(r:X\rightarrow G\) necessarily satisfies:
\begin{enumerate}
    \item \(r\left(g.x\right)g.x=r\left(x\right).x\) for every \(g\in G\) and \(x\in X\).
    \item \(r\left(g.x\right)gr\left(x\right)^{-1}\in G_{r\left(x\right).x}\) for every \(g\in G\) and \(x\in X\).
\end{enumerate} 
\end{lem}

\begin{proof}
For \(x,x'\in X\) denote \(G_{x,x'}=\{g\in G:g.x=x'\}\). Then either \(G_{x,x'}=\emptyset\) or that for an arbitrary \(g\in G_{x,x'}\) we have \(G_{x,x'}=gG_{x}\), in particular it is closed in \(G\) by Theorem~\ref{thm:miller}, so we view \(G_{x,x'}\) as an element of \(\mathbf{F}\left(G\right)\). By the Kuratowski--Ryll-Nardzewski Selection Theorem (see \cite[Theorem 1.4.6]{gao2008invariant}) there is a Borel map 
\[\xi:\mathbf{F}\left(G\right)\backslash\{\emptyset\}\to G\text{ satisfying }\xi\left(C\right)\in C\text{ for every }C\in \mathbf{F}\left(G\right)\backslash\{\emptyset\}.\]
Fix a Borel selector \(s:X\to X\) for \(E_{G}^{X}\), and for \(x\in X\), since \(G_{x,s\left(x\right)}\neq\emptyset\), put
\[r:X\to G,\quad r\left(x\right)\coloneqq\xi\left(G_{x,s\left(x\right)}\right).\]
Then \(r\left(x\right).x=s\left(x\right)\in W_{o}\) for all \(x\in X\). It is Borel since it is composed of
\[r:X\xrightarrow{x\mapsto\left(x,s\left(x\right)\right)}X\times X\xrightarrow{\small(x,x'\small)\mapsto G_{x,x'}}\mathbf{F}\left(G\right)\xrightarrow{C\mapsto\xi\left(C\right)}G,\]
where the second is Borel by Becker--Kechris Theorem (see \cite[Theorem 8.2.1]{gao2008invariant}). Finally, the properties of \(r\) follow directly from the defining property of \(W_{o}\).
\end{proof}

\begin{proof}[Proof of Theorem~\ref{thm:smoothactions}]
First, if \(X\) is (isomorphic to) the Borel \(G\)-space \(X_{o}\) that is constructed out of \(\psi_{o}:W_{o}\to\mathbf{F}_{\mathrm{gr}}\left(G\right)\), then the graph
\[\left\{ \left(w,\psi_{o}\left(w\right)\right):w\in W_{o}\right\} \subseteq X\]
of \(\psi_{o}\) is a transversal for \(\left(X,\mu\right)\), so \(E_{G}^{X}\) is  smooth. For the converse, suppose \(X\) is a Borel \(G\)-space and \(E_{G}^{X}\) is smooth, pick a Borel transversal \(W_{o}\subseteq X\) and define
\[\psi_{o}:W_{o}\to\mathbf{F}_{\mathrm{gr}}\left(G\right),\quad\psi_{o}\left(w\right)=G_{w},\quad w\in W_{o}.\]
By Theorem~\ref{thm:miller} this is a Borel map, so let \(X_{o}\) be the Borel \(G\)-space associated with \(\psi_{o}:W_{o}\to\mathbf{F}_{\mathrm{gr}}\left(G\right)\). Pick a Borel function \(r:X\to G\) as in Lemma~\ref{lem:grpselect}, and define
\[\iota:X\to X_{o}\subseteq W_{o}\times\mathbf{F}\left(G\right),\quad\iota\left(x\right)=\big(r\left(x\right).x,r\left(x\right)^{-1}G_{r\left(x\right).x}\big).\]
In proving that \(\iota\) is an isomorphism we will constantly use the properties of \(r\) in Lemma~\ref{lem:grpselect}. First, \(\iota\) is equivariant since for every \(g\in G\) and \(x\in X\) we have
\[\iota\left(g.x\right)=\big(r\left(g.x\right)g.x,r\left(g.x\right)^{-1}G_{r\left(g.x\right)g.x}\big)=\big(r\left(x\right).x,gr\left(x\right)^{-1}G_{r\left(x\right).x}\big)=g.\iota\left(x\right).\]To see that \(\iota\) is surjective, note that for every \(g\in G\) and \(w\in W_{o}\),    \[w=r\left(w\right).w=r\left(g.w\right).w=r\left(g.w\right)g.w,\]
and in particular \(r\left(g.w\right)g\in G_{w}\), hence
\begin{align*}
\iota\left(g.w\right)
&=\big(r\left(g.w\right)g.w,r\left(g.w\right)^{-1}G_{r\left(g.w\right)g.w}\big)\\
&=\big(w,r\left(g.w\right)^{-1}G_{w}\big)=\left(w,gG_{w}\right)=\left(w,g\psi_{o}\left(w\right)\right).
\end{align*}
To see that \(\iota\) is injective, define \(\jmath:X_{o}\to X\) by
\[\quad\jmath\left(w,g\psi_{o}\left(w\right)\right)=g.w,\quad\left(w,g\psi_{o}\left(w\right)\right)\in X_{o}.\]
We verify that \(\jmath\) is well-defined. If \(\left(w,g\psi_{o}\left(w\right)\right)=\left(w',g'\psi_{o}\left(w'\right)\right)\) then \(g\psi_{o}\left(w\right)=g'\psi_{o}\left(w'\right)=g'\psi_{o}\left(w\right)\), hence \(g.w=g'.w\) so that
\[\jmath\left(w,g\psi_{o}\left(w\right)\right)=g.w=g'.w=g'.w'=\jmath\left(w',g'\psi_{o}\left(w'\right)\right).\]
We finally see that \(\jmath\) is the inverse of \(\iota\), since for every \(x\in X\),
\[\jmath\left(\iota\left(x\right)\right)=\jmath\big(r\left(x\right).x,r\left(x\right)^{-1}G_{r\left(x\right).x}\big)=r\left(x\right)^{-1}r\left(x\right).x=x.\qedhere\]
\end{proof}

\ssection{Proofs}

The following theorem was proved by Krengel for \(G=\mathbb{R}\) \cite[Satz 3.1]{krengel1968I}, \cite[Satz 4.2]{krengel1969II}, and by Rosinski for \(G=\mathbb{R}^{d}\) \cite[Theorem 2.2]{rosinski2000decomposition}. In what follows we provide a proof for a general \(G\) which is greatly inspired by Rosinski's proof.

\begin{thm}[Following Krengel--Rosinski]
\label{thm:disssmooth}
If \(\left(X,\mu\right)\) is a totally dissipative nonsingular \(G\)-space, then \(E_{G}^{X}\) is essentially smooth.
\end{thm}

\begin{proof}
In the proof we will work with the conull set \(\mathcal{T}\) as in Proposition~\ref{prop:recurrence}, and show that the orbit equivalence relation restricted to \(\mathcal{T}\) is countably separated by Borel functions \(\left\{u_{n,m}:n,m\in\mathbb{N}\right\}\), from which smoothness follows by Theorem~\ref{thm:smooth}.

\subsubsection*{Step 1:} (constructing \(\left\{u_{n,m}:n,m\in\mathbb{N}\right\}\)).

We recall that \(\mathcal{T}\) is the measurable union of the class of Haar-transient sets of \(\left(X,\mu\right)\). As such, the exhaustion lemma~\ref{lem:exhaust} guarantees that \(\mathcal{T}\) is the union of countably many Haar-transient sets, \(\left\{T_{1},T_{2},\dotsc\right\}\). Let us also apply Varadarajan's compact model theorem \cite[Theorem 5.7, p. 160]{varadarajan1968geometry}. Thus, there is a compact separable \(G\)-space \(\widebar{X}\) (i.e. there is a jointly continuous action of \(G\) on \(\widebar{X}\)), and a Borel \(G\)-embedding of \(X\) in \(\widebar{X}\). We thus can fix a countable base \(\mathcal{U}=\left\{U_{1},U_{2},\dotsc\right\}\) for the topology of \(\widebar{X}\), and we may further assume that it is closed to finite unions and finite intersections. We now put, for every \(n,m\in\mathbb{N}\), the function
\[u_{n,m}:X\to\mathbb{R},\quad u_{n,m}\left(x\right)\coloneqq \lambda\left(R_{U_{n}\cap T_{m}}\left(x\right)\right)=\int_{G}1_{U_{n}\cap T_{m}}\left(g^{-1}.x\right)d\lambda\left(g\right).\]
We recall that each \(U_{n}\cap T_{m}\), being a subset of a Haar-transient set, is Haar-transient, so that \(u_{n,m}\) takes only finite values on the \(\mu\)-conull set \(\mathcal{T}\).

\subsubsection*{Step 2:} (a positivity property of \(\left\{u_{n,m}:n,m\in\mathbb{N}\right\}\)).

We will show that for every \(x_{o}\in\mathcal{T}\), there are \(n_{0},m_{0}\in\mathbb{N}\) with \(u_{n_{0},m_{0}}\left(x_{o}\right)>0\). Since \(\left\{U_{n}\cap T_{m}:n,m\in\mathbb{N}\right\}\) covers \(\mathcal{T}\), for every \(g\in G\) also \(\left\{g.U_{n}\cap g.T_{m}:n,m\in\mathbb{N}\right\}\) covers \(\mathcal{T}\). Thus, for every \(x_{o}\in\mathcal{T}\) we have
\[\sum\nolimits_{n,m\in\mathbb{N}}1_{U_{n}\cap T_{m}}\left(g^{-1}.x_{o}\right)\geq1.\]
It then follows that
\[\sum\nolimits_{n,m\in\mathbb{N}}u_{n,m}\left(x_{o}\right)=\int_{G}\left(\sum\nolimits_{n,m\in\mathbb{N}}1_{U_{n}\cap T_{m}}\left(g^{-1}.x_{o}\right)\right)d\lambda\left(g\right)=+\infty,\]
which readily implies the desired property.

\subsubsection*{Step 3:} (each \(u_{n,m}\) is constant along orbits).

Let \(\left(x_{o},h.x_{o}\right)\in E_{G}^{X}\) for some \(x_{o}\in X\) and \(h\in G\). Using \eqref{eq:riden}, for every \(n,m\in\mathbb{N}\),
\begin{align*}
u_{n,m}\left(h.x_{o}\right)
&=\lambda\left(R_{U_{n}\cap T_{m}}\left(h.x_{o}\right)\right)\\
&=\lambda\left(h.R_{U_{n}\cap T_{m}}\left(x_{o}\right)\right)=\lambda\left(R_{U_{n}\cap T_{m}}\left(x_{o}\right)\right)=u_{n,m}\left(x_{o}\right).
\end{align*}

\subsubsection*{Step 4:} (\(E_{G}^{X}\) is separated by \(\left\{u_{n,m}:n,m\in\mathbb{N}\right\}\)).

Let \(\left(x_{0},x_{1}\right)\notin E_{G}^{X}\). Using Step 2, we pick \(n_{0},m_{0}\in\mathbb{N}\) for \(x_{0}\) with \(u_{n_{0},m_{0}}\left(x_{0}\right)>0\). If \(u_{n_{0},m_{0}}\left(x_{0}\right)\neq u_{n_{0},m_{0}}\left(x_{1}\right)\) we are done, so assume otherwise that 
\[c\coloneqq u_{n_{0},m_{0}}\left(x_{0}\right)=u_{n_{0},m_{0}}\left(x_{1}\right)>0.\]
By the inner regularity of \(\lambda\), there is a compact symmetric set \(K\subset G\) such that
\[\int_{K}1_{U_{n_{0}}\cap T_{m_{0}}}\left(g^{-1}.x_{i}\right)>c/2,\quad i\in\left\{ 0,1\right\}.\]
By the continuity of the action on \(\overline{X}\), the sets \(K.x_{0}\) and \(K.x_{1}\) are compact in \(\overline{X}\) and, since \(\left(x_{0},x_{1}\right)\notin E_{G}^{X}\), necessarily \(K.x_{0}\cap K.x_{1}=\emptyset\).

As \(\mathcal{U}\) is closed to finite unions, we may pick \(n_{1}\in\mathbb{N}\) such that
\[K.x_{0}\subseteq U_{n_{1}}\text{ and }K.x_{1}\cap U_{n_{1}}=\emptyset.\]
As \(\mathcal{U}\) is closed to finite intersections, we may pick \(n_{2}\in\mathbb{N}\) such that
\[U_{n_{0}}\cap U_{n_{1}}=U_{n_{2}}.\]
We then claim that \(u_{n_{2},m_{0}}\) assigns different values to \(x_{0}\) and \(x_{1}\).

On one hand, from \(K.x_{0}\cap U_{n_{0}}\subseteq U_{n_{0}}\cap U_{n_{1}}=U_{n_{2}}\) we have
\begin{align*}
u_{n_{2},m_{0}}\left(x_{0}\right)
&=\int_{G}1_{U_{n_{2}}\cap T_{m_{0}}}\left(g^{-1}.x_{0}\right)d\lambda\left(g\right)\\
&\geq\int_{G}1_{K.x_{0}\cap U_{n_{0}}\cap T_{m_{0}}}\left(g^{-1}.x_{0}\right)d\lambda\left(g\right)\\
&\geq\int_{K}1_{U_{n_{0}}\cap T_{m_{0}}}\left(g^{-1}.x_{0}\right)d\lambda\left(g\right)>c/2.
\end{align*}
On the other hand, from \(K.x_{1}\cap U_{n_{2}}\subseteq K.x_{1}\cap U_{n_{1}}=\emptyset\) we have
\[\int_{K}1_{U_{n_{2}}\cap T_{m_{0}}}\left(g^{-1}.x_{1}\right)d\lambda\left(g\right)\leq\int_{G}1_{K.x_{1}\cap U_{n_{2}}\cap T_{m_{0}}}\left(g^{-1}.x_{1}\right)d\lambda\left(g\right)=0,\]
hence
\begin{align*}
u_{n_{2},m_{0}}\left(x_{1}\right)
&=\int_{G\backslash K}1_{U_{n_{2}}\cap T_{m_{0}}}\left(g^{-1}.x_{1}\right)d\lambda\left(g\right)\\
&\leq\int_{G\backslash K}1_{U_{n_{0}}\cap T_{m_{0}}}\left(g^{-1}.x_{1}\right)d\lambda\left(g\right)\\
&=u_{n_{0},m_{0}}\left(x_{1}\right)-\int_{K}1_{U_{n_{0}}\cap T_{m_{0}}}\left(g^{-1}.x_{1}\right)d\lambda\left(g\right)<c-c/2=c/2.
\end{align*}
Thus, \(u_{n_{2},m_{0}}\left(x_{0}\right)\neq u_{n_{2},m_{0}}\left(x_{1}\right)\), concluding the proof of Step 4.

Finally, by the virtue of Steps 3 and 4, \(E_{G}^{X}\) is countably separated by Borel functions, and from Theorem~\ref{thm:smooth} we deduce that it is smooth.
\end{proof}

\begin{thm}
\label{thm:compstab}
If \(\left(X,\mu\right)\) is a totally dissipative nonsingular \(G\)-space, then the (\(G\)-invariant) set \(X_{\mathrm{c}}\subseteq X\) of points whose stabilizers are compact is \(\mu\)-conull.
\end{thm}

For the proof of Theorem~\ref{thm:compstab} we will need the following elementary fact, generalizing that an lcsc group with finite Haar measure is compact. The proof is an adaptation of \cite[Chapter II, \S5, Proposition 4]{nachbin1965haar}.

\begin{lem}
\label{lem:compcriter}
Every lcsc group admitting a Haar-integrable continuous homomorphism into \(\mathbb{R}_{>0}\) is compact.
\end{lem}

\begin{proof}
Let \(H\) be an lcsc group with a left Haar measure \(\lambda_{H}\), and \(\psi:H\to\mathbb{R}_{>0}\) a continuous homomorphism with \(\int_{H}\psi d\lambda_{H}<+\infty\). If \(\psi\) is bounded it is the trivial homomorphism hence \(\lambda_{H}\left(H\right)=\int_{H}\psi d\lambda_{H}<+\infty\),
in which case \(H\) is compact. Suppose \(\psi\) is unbounded and, since it is continuous, \(H\) is non-compact and \(\psi\) is unbounded outside every compact set in \(H\). Fix a compact set \(K\subset H\) with
\[I_{K}\coloneqq \int_{K}\psi d\lambda_{H}\in\left(0,+\infty\right),\]
and define a sequence \(h_{1},h_{2},\dotsc\in H\) such that \(h_{1}K,h_{2}K,\dotsc\) are pairwise disjoint and \(\psi\left(h_{i}\right)\geq 1\) for \(i=1,2,\dotsc\). For every \(n\in\mathbb{N}\) we have \(\sum\nolimits_{i=1}^{n}1_{K}\left(h_{i}^{-1}h\right)\leq1\) for all \(h\in H\), and using the left invariance of \(\lambda_{H}\) and that \(\psi\) is a homomorphism we get
\begin{align*}
+\infty
&>\int_{H}\psi\left(h\right)d\lambda_{H}\left(h\right)\\
&\geq\sum\nolimits_{i=1}^{n}\int_{H}1_{K}\left(h_{i}^{-1}h\right)\psi\left(h\right)d\lambda_{H}\left(h\right)\\
&=\int_{K}\psi\left(h\right)d\lambda_{H}\left(h\right)\cdot\sum\nolimits_{i=1}^{n}\psi\left(h_{i}\right)\geq I_{K}\cdot n.
\end{align*}
Since \(I_{K}\in\left(0,+\infty\right)\) and \(n\in\mathbb{N}\) is arbitrary we got to a contradiction.
\end{proof}

In the following proof we make use of the construction of \emph{measures on homogeneous spaces} and \emph{Weil's formula} (see Theorem~\ref{thm:meashom} in the \hyperref[Appendix: Measures on Homogeneous Spaces]{Appendix}).

\begin{proof}[Proof of Theorem~\ref{thm:compstab}]
Pick some \(f\in L_{+}^{1}\left(X,\mu\right)\) and let \(x\in\mathcal{D}_{f}\) be fixed from now until (nearly) the end of the proof. In light of Proposition~\ref{prop:dissacim} we assume that \(\mu\) is \(G\)-invariant and thus \(\nabla_{g}\equiv 1\) for every \(g\in G\). We apply the construction of measures on homogeneous spaces as in Theorem~\ref{thm:meashom} to the group pair \(G_{x}\lessdot G\) (recall Theorem~\ref{thm:miller}). Thus, \(G\) has the Haar measure \(\lambda\) and the corresponding modular function \(\varDelta\), and for \(G_{x}\) we fix a Haar measure \(\lambda_{x}\) and the corresponding modular function \(\varDelta_{x}\). Pick a rho-function \(\rho_{x}:G\to\mathbb{R}_{>0}\) for \(G_{x}\lessdot G\) and a quasi-invariant measure \(\kappa_{x}\) on \(G/G_{x}\) with respect to \(\rho_{x}\). Applying Weil's formula \eqref{eq:weil1} to the function \(G\to\mathbb{R}_{>0}\), \(g\mapsto f\left(g.x\right)\), we obtain
\begin{align*}
S_{f}^{G}\left(x\right)
&=\int_{G}f\left(g.x\right)d\lambda\left(g\right)\\
&=\int_{G/G_{x}}\Big[\int_{G_{x}}f\left(gc.x\right)\varDelta\left(c\right)\varDelta_{x}\left(c\right)^{-1}d\lambda_{x}\left(c\right)\Big]\rho\left(g\right)^{-1}d\kappa_{x}\left(gG_{x}\right)\\
&=\Big[\int_{G/G_{x}}f\left(g.x\right)\rho\left(g\right)^{-1}d\kappa_{x}\left(gG_{x}\right)\Big]\cdot\Big[\int_{G_{x}}\varDelta\left(c\right)\varDelta_{x}\left(c\right)^{-1}d\lambda_{x}\left(c\right)\Big].
\end{align*}
Since \(x\in\mathcal{D}_{f}\) we have \(S_{f}^{G}\left(x\right)<+\infty\), and since all terms are strictly positive,
\[\int_{G_{x}}\varDelta\left(c\right)\varDelta_{x}\left(c\right)^{-1}d\lambda_{x}\left(c\right)<+\infty.\]
However, \(G_{x}\to\mathbb{R}_{>0}\), \(c\mapsto\varDelta\left(c\right)\varDelta_{x}\left(c\right)^{-1}\), is a continuous homomorphism, then \(G_{x}\) is compact by Lemma~\ref{lem:compcriter}. Finally, letting \(X_{\mathrm{c}}\) be the set of points in \(X\) whose stabilizer is compact (which is Borel by Theorem~\ref{thm:miller}), we have shown that \(\mathcal{D}_{f}\subseteq X_{\mathrm{c}}\) modulo \(\mu\). Since \(\left(X,\mu\right)\) is totally dissipative we deduce that \(X_{\mathrm{c}}\) is \(\mu\)-conull.
\end{proof}

\begin{proof}[Proof of Theorem~\ref{mthm:disssmooth}]
The implication (1)\(\implies\)(2) is the virtue of Theorems~\ref{thm:disssmooth} and~\ref{thm:compstab}. Let us show the implication (2)\(\implies\)(1). By the assumption, the \(G\)-invariant set \(X_{\mathrm{c}}\) of points whose stabilizers are compact is \(\mu\)-conull, and by the other assumption we may find a Borel transversal \(W_{o}\) on some \(\mu\)-conull subset of \(X_{\mathrm{c}}\), thus the stabilizer of \emph{every} point in \(W_{o}\) is compact. Recall that by the identity \eqref{eq:riden}, a translation of a Haar-transient set is again Haar-transient. Then by the property of \(W_{o}\) as a transversal, in order to prove that \(\mu\)-a.e. \(x\in X\) is contained in a \(\mu\)-positive Haar-transient set it is sufficient to show that this is true for every point in \(W_{o}\). Since \(G.W_{o}=X\) and \(G\) is lcsc, it is sufficient to show that \(C.W_{o}\) is Haar-transient for every compact set \(C\subset G\). Indeed, note the identity
\[R_{C.W_{o}}\left(w\right)=G_{w}C^{-1},\quad w\in W_{o},\]
from which it follows that for \(\mu\)-a.e. \(x\in X\), picking \(g_{0}\in G\) with \(g_{0}.x\in W_{o}\), we have
\[R_{C.W_{o}}\left(x\right)=g_{0}^{-1}R_{C.W_{o}}\left(g_{0}.x\right)=g_0^{-1}G_{g_{0}.x}C^{-1}.\]
Since \(G_{g_{0}.x}\) and \(C^{-1}\) are both compact so is \(g_{0}^{-1}G_{g_{0}.x}C^{-1}\), and we conclude that \(C.W_{o}\) is transient and a fortiori Haar-transient. This shows that \(\left(X,\mu\right)\) is totally dissipative, completing the proof of Theorem~\ref{mthm:disssmooth}.
\end{proof}

\subsection{Final proofs of the recurrence theorems}
\label{sct:finalrecurrence}

Theorem~\ref{mthm:disssmooth} allows us to complete the proof of Theorems~\ref{mthm:recurrence} and \ref{mthm:recurrencepoincare}.

\begin{proof}[Proof of Theorem~\ref{mthm:recurrence}]
As we mentioned before, the proof of the version with Haar-recurrent and Haar-transient sets follows directly from the recurrence theorem~\ref{thm:recurrence} and from \Cref{prop:recurrence} by taking \(f\) to be an indicator function for the appropriate subset, so we prove now the version with recurrent and transient sets. Let \(\mathfrak{T}\) and \(\mathfrak{S}\) be the collections of Haar-transient sets and of transient sets, respectively. Since both are hereditary they admit measurable unions that we denote \(\mathcal{T}\) and \(\mathcal{S}\), respectively. As follows from \Cref{mthm:hopfdeco} and \Cref{prop:recurrence}, the Hopf decomposition \(X=\mathcal{C}\sqcup\mathcal{D}\) is determined by the property that \(\mu\left(\mathcal{D}\triangle\mathcal{T}\right)=0\), so it is sufficient to show that \(\mathcal{T}=\mathcal{S}\) modulo \(\mu\). On one hand, as every transient set is Haar-transient, it is clear that \(\mathcal{S}\subseteq\mathcal{D}\) modulo \(\mu\). Conversely, in the proof of the recurrence theorem~\ref{thm:recurrence} we have shown that \(\mu\)-a.e. point in \(\mathcal{D}\) is contained in a \(\mu\)-positive transient set, so \(\mathcal{D}\) is a measurable union of the transient sets, thus \(\mathcal{D}=\mathcal{S}\) modulo \(\mu\).

From this we immediately see that every \(\mu\)-positive subset of \(\mathcal{D}\) has a \(\mu\)-positive transient subset. We need to show that every subset of \(\mathcal{C}\) is recurrent. Given a Borel set \(A\subseteq \mathcal{C}\), we consider the set
\[T_{A}\coloneqq \left\{x\in A: R_{T}\left(x\right)\text{ is relatively compact}\right\},\]
and for \(r>0\) consider the set
\[T_{A,r}\coloneqq \left\{ x\in T_{A}:R_{T_{A}}\left(x\right)\subseteq B_{r}\right\},\]
where \(B_{r}\) is the ball of radius \(r>0\) with respect to some compatible proper metric on \(G\).\footnote{Which generally exists by Struble's theorem \cite{struble1974metrics}.} We then see that
\[g\in R_{T_{A,r}}\left(x\right)\iff\left(g^{-1}.x\in T_{A}\right)\wedge\left(R_{T_{A}}\left(x\right)\subseteq B_{r}\right),\]
hence
\[R_{T_{A,r}}\left(x\right)=\begin{cases}
R_{T_{A}}\left(x\right) & R_{T_{A}}\left(x\right)\subseteq B_{r}\\
\emptyset & \text{otherwise},
\end{cases},\]
and in any case \(R_{T_{A,r}}\left(x\right)\subseteq B_{r}\), thus \(T_{A,r}\) is transient and is contained in \(A\), and by the assumption \(\mu\left(T_{A,r}\right)=0\). Since \(T_{A,r}\nearrow T_{A}\) as \(r\nearrow +\infty\) we deduce that \(\mu\left(T_{A}\right)=0\). Thus, \(A\) is recurrent.
\end{proof}

\begin{proof}[Proof of Theorem~\ref{mthm:recurrencepoincare}]
We will relate to a nonsingular \(G\)-space \(\left(X,\mu\right)\) as \emph{Poincar\'{e} recurrent} (\emph{Poincar\'{e} Haar-recurrent}) if every \(\mu\)-positive Borel subset of \(X\) is Poincar\'{e} recurrent (\emph{Poincar\'{e} Haar-recurrent}, respectively). We will then show that
\begin{align*}
&\text{Poincar\'{e} Haar-recurrence }\implies\text{ Poincar\'{e} recurrence }\\
&\qquad\quad\implies\text{ conservativity }\implies\text{ Poincar\'{e} Haar-recurrence.}
\end{align*}

The first implication is clear. As for the second implication, if \(\left(X,\mu\right)\) is not conservative then as we have shown in the proof of Theorem \ref{mthm:disssmooth} there can be found a \(\mu\)-positive transient set \(T\subseteq X\) and \(r>0\) such that \(R_{T}\left(x\right)\subseteq B_{r}\) for \emph{every} \(x\in T\). This means that \(T\cap g.T=\emptyset\) for every \(g\in G\backslash B_{r}\), hence \(T\) is not Poincar\'{e} recurrent. As for the third implication, first note that by the Fubini Theorem, for every Borel set \(A\subseteq X\) we have the identity
\begin{equation}
\label{eq:poincare}
\lambda\left(g\in G:\mu\left(A\cap g.A\right)>0\right)=\int_{X}1_{A}\left(x\right)\lambda\left(R_{A}\left(x\right)\right)\mu\left(x\right).
\end{equation}
It follows that, if \(\left(X,\mu\right)\) is not Poincar\'{e} Haar-recurrent, there can be found a \(\mu\)-positive Borel set \(A\subseteq X\) for which the quantity in \eqref{eq:poincare} is finite, thus \(\lambda\left(R_{A}\left(x\right)\right)<+\infty\) for \(\mu\)-a.e. \(x\in A\), meaning that \(A\) is not Haar-recurrent. By Theorem~\ref{mthm:recurrence} it follows that \(\left(X,\mu\right)\) is not conservative.
\end{proof}

\supersection{Fourth general form of the Hopf decomposition}

\section{Maharam extensions}

The Maharam extension is a classical construction in ergodic theory due to D. Maharam \cite{maharam1964incompressible}, and it associates with every nonsingular \(G\)-space a canonical (infinite) measure preserving \(G\)-space as follows. For a nonsingular \(G\)-space \(\left(X,\mu\right)\) with Radon--Nikodym cocycle \(\nabla\), define its {\bf Maharam extension} to be the measure preserving \(G\)-space
\[\big(\widetilde{X},\widetilde{\mu}\big)\coloneqq\left(X\times\mathbb{R},\mu\otimes\eta\right),\text{ where }d\eta\left(t\right)\coloneqq e^{t}dt,\]
and with the action given by
\[g.\left(x,t\right)=\left(g.x,t-\log\nabla_{g}\left(x\right)\right).\]
One can routinely check that this indeed forms a measure preserving \(G\)-space.

The first key property of this construction, which was famously proved by Maharam~\cite[Theorem 2]{maharam1964incompressible} for a single transformation, and by Schmidt \cite[Theorem 4.2 (p. 47), Theorem 5.5 (p. 56)]{schmidt1977cocycles} for countable groups, is that the Maharam extension of a conservative nonsingular \(G\)-space is conservative. In other words, the Radon--Nikodym cocycle of a conservative nonsingular \(G\)-space is \emph{recurrent} \cite[\S4]{schmidt1977cocycles}.

\begin{rem}
\label{rem:skewprod}
Given any additive cocycle \(\Psi:G\times X\to\mathbb{R}\), \(\Psi:\left(g,x\right)\mapsto\Psi_{g}\left(x\right)\), one can construct the skew-product Borel \(G\)-space \(X\ltimes_{\Psi}\mathbb{R}\) with action defined by
\[g.\left(x,t\right)=\left(g.x,t+\Psi_{g}\left(x\right)\right).\]
If \(\left(X,\mu\right)\) is a nonsingular \(G\)-space with Hopf decomposition \(X=\mathcal{C}\sqcup\mathcal{D}\), and \(\eta\) is an absolutely continuous measure on \(\mathbb{R}\), then the Hopf decomposition of the nonsingular \(G\)-space \(\left(X\ltimes_{\Psi}\mathbb{R},\mu\otimes\eta\right)\) takes the form
\[X\ltimes_{\Psi}\mathbb{R}=\left(C_{\Psi}\times\mathbb{R}\right)\sqcup\left(D_{\Psi}\times\mathbb{R}\right),\text{ for }G\text{-invariant sets }C_{\Psi}\subseteq\mathcal{C}\text{ and }D_{\Psi}\supseteq\mathcal{D}.\footnote{This can be shown using the fact that the action commutes with the \(\mathbb{R}\)-flow \(s.\left(x,t\right)=\left(x,t+s\right)\).}\]
It is generally possible that \(C_{\Psi}=\emptyset\) and \(\mathcal{C}=X\). Nevertheless, Maharam's theorem~\ref{mthm:maharam} asserts that this is never the case when \(\Psi\) is the Radon--Nikodym cocycle.
\end{rem}

\section{Maharam's theorem}

In establishing the conservativity of the Maharam extension, we will make essential use of a property observed by Maharam~\cite[\S8]{maharam1964incompressible}, according to which if \(S\) is a nonsingular transformation of \(\left(X,\mu\right)\), then for \(\mu\)-almost every \(x\in X\), not only does
\[\sum\nolimits_{n\in\mathbb{N}}\textstyle{\frac{d\mu \circ S^{n}}{d\mu}(x)}=+\infty,\]
but moreover, there exists \(r_{x}>0 \) such that
\[\#\big\{n\in\mathbb{N}:{\textstyle \frac{d\mu\circ S^{n}}{d\mu}\left(x\right)}\geq r_{x}\big\}=+\infty.\]
This was generalized by Kaimanovich~\cite[Theorem 29]{kaimanovich2010hopf} to actions of countable groups on probability spaces. We now formulate Maharam's theorem for general lcsc groups via a significant generalization of this property to arbitrary \(\mathbb{R}_{>0}\)-valued cocycles. Recall that a (\(\mathbb{R}_{>0}\)-valued, multiplicative) Borel cocycle of a Borel \(G\)-space \(X\), is a Borel map \(\Psi:G\times X\to\mathbb{R}_{>0}\), \(\Psi:\left(g,x\right)\mapsto\Psi_{g}\left(x\right)\), such that
\[\Psi_{e}\left(x\right)=1\text{ and }\Psi_{gh}\left(x\right)=\Psi_{g}\left(h.x\right)\Psi_{h}\left(x\right)\text{ for all }x\in X\text{ and }g,h\in G.\]

The following fourth general form of the Hopf decomposition was proved by Maharam for a single transformation (cf. \cite[Proposition 4.34]{arano2021ergodic}).

\renewcommand{\themainthm}{D}
\begin{mainthm}
\label{mthm:maharam}
For every nonsingular \(G\)-space \(\left(X,\mu\right)\), the following are equivalent:
\begin{enumerate}
    \item \(\left(X,\mu\right)\) is conservative.
    \item Every Borel cocycle \(\Psi:G\times X\to\mathbb{R}_{>0}\) satisfies the following property:\\
    for \(\mu\)-a.e. \(x\in X\) there exists \(r=r_{\left(\Psi,x\right)}>0\) such that
    \[\lambda\left(g\in G:\Psi_{g^{-1}}\left(x\right)\geq r\right)=+\infty.\]
    \item For every \(f\in L_{+}^{1}\left(X,\mu\right)\) the following  property holds:\\
    for \(\mu\)-a.e. \(x\in X\) there exists \(r=r_{\left(f,x\right)}>0\) such that
    \[\lambda\big(g\in G:\nabla_{g}\left(x\right)f\left(g.x\right)\geq r\big)=+\infty.\]
    \item The Maharam extension \(\big(\widetilde{X},\widetilde{\mu}\big)\) of \(\left(X,\mu\right)\) is conservative.
\end{enumerate}
\end{mainthm}
\renewcommand{\themainthm}{\Alph{mainthm}}

\begin{proof}[Proof of \(\left(1\right)\implies\left(2\right)\) in Theorem~\ref{mthm:maharam}]
Fix a cocycle \(\Psi\), and for \(r>0\) and \(x\in X\) denote \(Q_{r}\left(x\right)\coloneqq\left\{ g\in G:\Psi_{g^{-1}}\left(x\right)\geq r\right\}\). Let \(E\) be the set where (2) fails for \(\Psi\),
\[E=E_{\Psi}\coloneqq\left\{ x\in X:\forall r>0,\,\lambda\left(Q_{r}\left(x\right)\right)<+\infty\right\}.\]
This is a Borel set by \cite[Theorem (17.25)]{kechris2012descriptive}. For the rest of the proof we will show that \(E\subseteq\mathcal{D}\) modulo \(\mu\), where \(\mathcal{D}\) is the dissipative part of \(\left(X,\mu\right)\).

Observe that \(E\) is \(G\)-invariant. Indeed, by the cocycle property, for arbitrary \(x\in X\) and \(g_{o}\in G\), for every \(r>0\) we have
\[Q_{r}\left(g_{o}.x\right)=\left\{ g\in G:\Psi_{g^{-1}}\left(g_{o}.x\right)\geq r\right\} =\left\{ g\in G:\Psi_{g^{-1}g_{o}}\left(x\right)\geq \Psi_{g_{o}}\left(x\right)r\right\},\]
and therefore, substituting \(g\mapsto g_{o}g\) and using the left-invariance of \(\lambda\),
\[\lambda\left(Q_{r}\left(g_{o}.x\right)\right)=\lambda\big(g\in G:\Psi_{g^{-1}}\left(x\right)\geq r\Psi_{g_{o}}\left(x\right)\big)=\lambda\left(Q_{\Psi_{g_{o}}\left(x\right)r}\left(x\right)\right).\]
This readily implies that \(E\) is \(G\)-invariant. Fix an arbitrary \(r>0\), and define
\[\tau_{r}:E\to\mathbb{R}_{>0},\quad\tau_{r}\left(x\right)\coloneqq\inf\left\{s>0:\lambda\left(Q_{s}\left(x\right)\right)\leq r\right\}.\]
This is a Borel function by \cite[Theorem (17.25)]{kechris2012descriptive}. Since \(\lambda\left(Q_{s}\left(x\right)\right)\nearrow+\infty\) as \(s\searrow 0\) for every \(x\in E\), by the assumption \(\tau_{r}\left(x\right)\) is strictly positive. Define
\[Q_{r}^{\prime}\left(x\right)\coloneqq Q_{\tau_{r}\left(x\right)}\left(x\right)\text{ for all }x\in E,\]
and define the set
\[T_{r}\coloneqq\bigcup\nolimits_{x\in E}Q_{r}^{\prime}\left(x\right)^{-1}.x,\]
which is analytic hence \(\mu\)-measurable (see \cite[Theorem (21.10)]{kechris2012descriptive}). Note that since \(E\) is \(G\)-invariant, \(T_{r}\subseteq E\). Let us show that \(T_{r}\) is Haar-transient. First note that by the assumption and using that \(\tau_{r}\left(x\right)>0\),
\begin{equation}
\label{eq:dprop}
\lambda\big(Q_{r}^{\prime}\left(x\right)\big)<+\infty\text{ for every }x\in E.
\end{equation}
Additionally, we claim that
\begin{equation}
\label{eq:tprop}
R_{T_{r}}\left(y\right)\subseteq\begin{cases}
Q_{r}^{\prime}\left(y\right) & y\in E\\
\emptyset & y\notin E
\end{cases}\quad\text{for every }y\in X.
\end{equation}
Let us justify \eqref{eq:tprop}. Start by noting that by definition, for every \(x\in E\),
\begin{align*}
Q_{r}^{\prime}\left(x\right)
&=\left\{ g\in G:\Psi_{g^{-1}}\left(x\right)\geq\tau_{r}\left(x\right)\right\}\\
&=\left\{ g\in G:\lambda\left(h\in G:\Psi_{h^{-1}}\left(x\right)\geq\Psi_{g^{-1}}\left(x\right)\right)\leq r\right\},
\end{align*}
and hence, for every \(g_{o}\in G\) and \(x\in E\),
\begin{align*}
g_{o}g\in Q_{r}^{\prime}\left(g_{o}.x\right)
&\iff\lambda\big(h\in G:\Psi_{h^{-1}}\left(g_{o}.x\right)\geq\Psi_{g^{-1}g_{o}^{-1}}\left(g_{o}.x\right)\big)\leq r\\
\small(\mathrm{i}\small)&\iff\lambda\big(h\in G:\Psi_{h^{-1}}\left(g_{o}.x\right)\geq\Psi_{g^{-1}}\left(x\right)\Psi_{g_{o}^{-1}}\left(g_{o}.x\right)\big)\leq r\\
\small(\mathrm{ii}\small)&\iff\lambda\big(h\in G:\Psi_{h^{-1}g_{o}}\left(x\right)\geq\Psi_{g^{-1}}\left(x\right)\big)\leq r\\
\small(\mathrm{iii}\small)&\iff\lambda\big(h\in G:\Psi_{h^{-1}}\left(x\right)\geq\Psi_{g^{-1}}\left(x\right)\big)\leq r\\
&\iff g\in Q_{r}^{\prime}\left(x\right),
\end{align*}
where in \(\small(\mathrm{i}\small)\) and \(\small(\mathrm{ii}\small)\) we used the cocycle property of \(\Psi\), and in \(\small(\mathrm{iii}\small)\) we used the substitution \(h\mapsto g_{o}h\) and the left-invariance of \(\lambda\). Thus, we have found that
\[Q_{r}^{\prime}\left(g_{o}.x\right)=g_{o}Q_{r}^{\prime}\left(x\right)\text{ for every }g_{o}\in G\text{ and }x\in E.\]
Now \eqref{eq:tprop} follows: let \(y\in X\) be arbitrary and suppose \(g\in R_{T_{r}}\left(y\right)\), thus \(g^{-1}.y\in T_{r}\), so there are \(x\in E\) and \(g_{o}\in Q_{r}^{\prime}\left(x\right)^{-1}\) such that \(g^{-1}.y=g_{o}.x\). Since \(E\) is \(G\)-invariant, \(y\in E\) (therefore \(R_{T_{r}}\left(y\right)=\emptyset\) when \(y\notin E\)), and we can then write
\[g=gg_{o}g_{o}^{-1}\in gg_{o}Q_{r}^{\prime}\left(x\right)=Q_{r}^{\prime}\left(gg_{o}.x\right)=Q_{r}^{\prime}\left(y\right),\]
establishing \eqref{eq:tprop}.

All together, from \eqref{eq:dprop} and \eqref{eq:tprop} it readily follows that \(T_{r}\subseteq E\) is a Haar-transient set, with \(r>0\) being arbitrary. Varying \(r\), let us show that \(T_{r}\nearrow E\) as \(r\nearrow+\infty\) modulo \(\mu\). Note that for every \(x\in E\) we have \(\tau_{r}\left(x\right)\searrow 0\) as \(r\nearrow+\infty\), so pick \(r_{x}>0\) sufficiently large with \(\tau_{r_{x}}\left(x\right)<\Psi_{e}\left(x\right)=1\), and therefore \(e\in Q_{r_{x}}^{\prime}\left(x\right)\) and \(x\in T_{r_{x}}\). Finally, while each \(T_{r}\) is only \(\mu\)-measurable, it contains a full-measure Borel subset (see \cite[Theorem (17.10)]{kechris2012descriptive}), and a subset of a Haar-transient set is Haar-transient. It then readily follows that \(E\) is a union of Haar-transients sets modulo \(\mu\), concluding by the recurrence theorem~\ref{thm:recurrence} that \(E\subseteq\mathcal{D}\) modulo \(\mu\).
\end{proof}

\begin{proof}[Proof of \(\left(2\right)\implies\left(3\right)\) in Theorem~\ref{mthm:maharam}]
For \(f\in L_{+}^{1}\left(X,\mu\right)\) define the Borel cocycle
\[\Psi^{f}:G\times X\to\mathbb{R}_{>0},\quad \Psi_{g}^{f}\left(x\right)\coloneqq\varDelta\left(g\right)\nabla_{g}\left(x\right)f\left(g.x\right)/f\left(x\right),\quad \left(g,x\right)\in G\times X,\]
where \(\varDelta\) is the modular function of \(G\) and \(\lambda\). For every \(r>0\) and \(x\in X\) we have
\begin{align*}
\lambda\big(g\in G:\Psi_{g^{-1}}^{f}\left(x\right)\geq r\big)
&=\lambda\left(g\in G:\varDelta\big(g^{-1}\big)\nabla_{g^{-1}}\left(x\right)f\left(g^{-1}.x\right)\geq rf\left(x\right)\right)\\
&=\lambda\left(g\in G:\nabla_{g}\left(x\right)f\left(g.x\right)\geq rf\left(x\right)\right).
\end{align*}
Then for \(\mu\)-a.e. \(x\in X\) there is \(r>0\) with \(\lambda\left(g\in G:\nabla_{g}\left(x\right)f\left(g.x\right)\geq rf\left(x\right)\right)=+\infty\), and therefore \(r_{\left(f,x\right)}\coloneqq rf\left(x\right)\) satisfies the required property.
\end{proof}

\begin{rem}
The same proof with the cocycle \(\left(g,x\right)\mapsto\nabla_{g}\left(x\right)f\left(g.x\right)/f\left(x\right)\) shows that for \(\mu\)-a.e. \(x\in X\) there is \(r>0\) with \(\lambda\left(g\in G:\nabla_{g^{-1}}\left(x\right)f\left(g^{-1}.x\right)\geq r\right)=+\infty\). 
\end{rem}

\begin{proof}[Proof of \(\left(3\right)\implies\left(4\right)\) in Theorem~\ref{mthm:maharam}]
Since conservativity depends only on the measure class, we may assume that \(\mu\) is a probability measure and replace the measure \(\widetilde{\mu}\) on the Maharam extension by the probability measure \(\widehat{\mu}\) given by
\[d\widehat{\mu}\left(x,t\right)=d\mu\left(x\right){\textstyle \frac{1}{2}}e^{-\left|t\right|}dt.\]
Therefore, \(\big(\widetilde{X},\widehat{\mu}\big)\) is no longer infinite measure preserving but a nonsingular probability \(G\)-space, and its associated Radon--Nikodym cocycle \(\widehat{\nabla}\) takes the form
\[\widehat{\nabla}_{g}\left(x,t\right)=\nabla_{g}\left(x\right)e^{\left|t\right|-\left|t+\log\nabla_{g}\left(x\right)\right|},\quad\left(x,t\right)\in\widetilde{X}.\]
By the triangle inequality we have
\[\widehat{\nabla}_{g}\left(x,t\right)\geq\nabla_{g}\left(x\right)e^{-\left|\log\nabla_{g}\left(x\right)\right|}\geq\min\big\{ 1,\nabla_{g}\left(x\right)^{2}\big\},\quad\left(x,t\right)\in\widetilde{X}.\]
Now by the assumption of (3) applied to \(1\in L_{+}^{1}\left(X,\mu\right)\), for \(\mu\)-a.e. \(x\in X\) there is \(0<r_{x}\leq 1\) such that for \(Q\left(x\right)\coloneqq\{g\in G:\nabla_{g}\left(x\right)\geq r_{x}\}\) we have \(\lambda\left(Q\left(x\right)\right)=+\infty\). It follows that for \(\widehat{\mu}\)-a.e. \(\left(x,t\right)\in\widetilde{X}\) (in fact, for \(\mu\)-a.e. \(x\in X\) and every \(t\in\mathbb{R}\)),
\[\int_{G}\widehat{\nabla}_{g}\left(x,t\right)d\lambda\left(g\right)\geq\int_{Q\left(x\right)}\min\big\{1,\nabla_{g}\left(x\right)^{2}\big\} d\lambda\left(g\right)\geq r_{x}^{2}\cdot\lambda\left(Q\left(x\right)\right)=+\infty.\]
We conclude by Theorem~\ref{mthm:hopfdeco} that \(\big(\widetilde{X},\widehat{\mu}\big)\) is conservative, hence so is \(\big(\widetilde{X},\widetilde{\mu}\big)\).
\end{proof}

\begin{proof}[Proof of \(\left(4\right)\implies\left(1\right)\) in Theorem~\ref{mthm:maharam}]
If \(T\subseteq X\) is a transient set of \(\left(X,\mu\right)\) then \(T\times\mathbb{R}\subseteq\widetilde{X}\) is a transient set of \(\big(\widetilde{X},\widetilde{\mu}\big)\), so the proof follows directly from Theorem~\ref{mthm:recurrence}.
\end{proof}

\supersection{The structure of totally dissipative actions}
\label{Section: The Structure of Totally Dissipative Actions}

As noted in the concluding remark of \cite{krengel1968I}, Krengel’s theorem on totally dissipative flows describes their general structure: up to an isomorphism, all totally dissipative nonsingular flows are obtained from some standard measure space \(\left(W_{o},\nu_{o}\right)\) via the construction \eqref{eq:transflow}. Our goal is to establish a structure theorem for totally dissipative nonsingular \(G\)-spaces for a general lcsc group \(G\). As we shall see, when \(G\) admits nontrivial compact subgroups, the general structure is more intricate.

The prototype of a totally dissipative nonsingular \(G\)-space is \(\left(G,\lambda\right)\) itself, which becomes a measure preserving \(G\)-space with the action by left-translations. This extends to two substantially different constructions: \emph{translation \(G\)-spaces} of the form \(\left(W_{o}\times G,\nu_{o}\otimes\lambda\right)\) (see Construction~\ref{cnst:translations}), and \emph{compactly fibered coset \(G\)-spaces} of the form \(\left(G/K,\kappa\right)\) for a compact subgroup \(K\) of \(G\) (see Construction~\ref{cnst:coset}). Those constructions turn out to be prime instances of dissipativity; the former is free and generally non-ergodic (unless \(\left(W_{o},\nu_{o}\right)\) is trivial), and the latter is non-free (unless \(K\) is trivial) and transitive. To obtain a general structure theorem we will unify translation \(G\)-spaces and compactly fibered coset \(G\)-spaces into one construction that we call \emph{Krengel \(G\)-spaces}, as illustrated in Figure~\ref{Figure: structure dissipative}.

\begin{figure}
\captionsetup{font={scriptsize,stretch=1}}
\[
\xymatrix@R=1.7cm@C=0cm{
    & \parbox{3.5cm}{Krengel \(G\)-spaces}\ar@{~>}[dl]|{+\text{free}}\ar@{~>}[dr]|{+\text{transitive}}
    & \\ 
    \parbox{3.7cm}{Translation \(G\)-spaces} \ar@{~>}[dr]|{+\text{transitive}}
    & \boxed{\parbox{4.1cm}{\centering \textbf{Totally dissipative}\\\textbf{nonsingular \(G\)-spaces}}}
    & \parbox{2.8cm}{Compactly fibered coset \(G\)-spaces} \ar@{~>}[dl]|{+\text{free}} \\
    & \qquad\qquad\parbox{3.8cm}{\quad\(G\curvearrowright\left(G,\lambda\right)\)} &
}
\]
\caption[.]{An arrow \(\xymatrix@1{A\ar@{~>}[r]|{+\text{P}}& B}\) presents the passage from class \(A\) of totally dissipative \(G\)-spaces to its subclass \(B\) that satisfies the restrictive property P.}
\label{Figure: structure dissipative}
\end{figure}

\begin{con}
\label{cnst:translations}
A {\bf translation \(G\)-space} is a measure preserving \(G\)-space of the form \(\left(W_{o}\times G,\nu_{o}\otimes\lambda\right)\), for some standard measure space \(\left(W_{o},\nu_{o}\right)\), with the action that is given by
\[g.\left(w,h\right)=\left(w,gh\right),\quad\left(w,h\right)\in W_{o}\times G,\quad g\in G.\]
We will refer to it as the {\bf translation \(G\)-space attached to \(\left(W_{o},\nu_{o}\right)\)}.
\end{con}

It is clear that every translation \(G\)-spaces is free and generally non-ergodic, unless \(\left(W_{o},\nu_{o}\right)\) is trivial (i.e. all Borel sets are null or conull). For free actions, Krengel's structure theorem can be directly generalized:

\renewcommand{\themainthm}{E}
\begin{mainthm}[Following Krengel--Rosinski]
\label{mthm:krengel}
An \emph{essentially free} nonsingular \(G\)-space is totally dissipative iff it is isomorphic to a translation \(G\)-space \eqref{cnst:translations}.
\end{mainthm}
\renewcommand{\themainthm}{\Alph{mainthm}}

To describe the structure of ergodic totally-dissipative \(G\)-spaces, we need the construction of compactly fibered coset \(G\)-spaces, using the well-known construction of measures on homogeneous spaces (see the \hyperref[Appendix: Measures on Homogeneous Spaces]{Appendix}).

\begin{con}
\label{cnst:coset}
A {\bf coset \(G\)-space} is a nonsingular \(G\)-space of the form \(\left(G/K,\kappa\right)\), where \(K\) is a closed subgroup of \(G\), with the action
\[g.hK=ghK,\quad g\in G,\,hK\in G/K,\]
and \(\kappa\) is the unique (up to measure class) quasi-invariant measure. When \(K\) is compact we will call \(\left(G/K,\kappa\right)\) a {\bf compactly fibered coset \(G\)-space}.
\end{con}

For ergodic actions, the following structure theorem appears to be due to Glimm--Effros (see \cite[Definition 5.2]{feldman1978orbit}, \cite[Prop. 2.1.10]{zimmer2013ergodic}, \cite[Thm. 9.12]{nadkarni2013basic}).

\renewcommand{\themainthm}{F}
\begin{mainthm}[Following Glimm--Effros]
\label{mthm:glimmeffros}
An \emph{ergodic} nonsingular \(G\)-space is totally dissipative iff it is isomorphic to a compactly fibered coset \(G\)-space \eqref{cnst:coset}.
\end{mainthm}
\renewcommand{\themainthm}{\Alph{mainthm}}

When \(G\) is countable, a dissipative nonsingular \(G\)-space is never ergodic provided the measure is nonatomic. This is not true in general: coset \(G\)-spaces (which can be nonatomic for uncountable \(G\)) are transitive. Recall that a nonsingular \(G\)-space is {\bf properly ergodic} if it is ergodic and not essentially transitive, i.e. \(\mu\) is not supported on a single orbit. A direct corollary of Theorem~\ref{mthm:glimmeffros} is:

\begin{cor}
Dissipative nonsingular \(G\)-spaces are never properly ergodic.
\end{cor}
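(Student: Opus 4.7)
The plan is to combine ergodicity with the total dissipativity reduction afforded by Theorem \ref{Main theorem: Glimm--Effros}. Suppose $(X,\mu)$ is both dissipative and ergodic. First I would observe that the Hopf Decomposition $X=\mathcal{C}\sqcup\mathcal{D}$ produced by Theorem \ref{Main theorem: Hopf Decomposition} is into $G$-invariant Borel sets, so ergodicity forces one of the two parts to be $\mu$-null. Since we are assuming $\mu(\mathcal{D})>0$, this forces $\mu(\mathcal{C})=0$; that is, $(X,\mu)$ is totally dissipative.

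Next I would apply Theorem \ref{Main theorem: Glimm--Effros}: an ergodic totally dissipative nonsingular $G$-space is isomorphic to a compactly fibered coset $G$-space $(G/K,\kappa)$ for some compact subgroup $K\leq G$. The latter is a transitive nonsingular $G$-space, carried by a single orbit. Transporting this back through the isomorphism, $\mu$ is supported on a single $G$-orbit in $X$, so $(X,\mu)$ is essentially transitive. By definition of proper ergodicity this means $(X,\mu)$ is not properly ergodic, which is what we wanted to show.

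There is essentially no obstacle here beyond quoting Theorem \ref{Main theorem: Glimm--Effros}; the only subtlety to watch is the trivial remark that in the ergodic/dissipative dichotomy there is no middle ground, which is what rules out the possibility of a mixed Hopf decomposition and lets us move from \emph{dissipative} to \emph{totally dissipative}.
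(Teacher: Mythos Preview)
Your argument is correct and is exactly the intended one: the paper presents this corollary as a direct consequence of Theorem~\ref{Main theorem: Glimm--Effros}, and your two steps---using ergodicity plus $G$-invariance of the Hopf decomposition to pass from dissipative to totally dissipative, then invoking Theorem~\ref{Main theorem: Glimm--Effros} to obtain essential transitivity---are precisely what that entails.
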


We now describe Krengel \(G\)-spaces. Recall the standard Borel spaces
\[\mathbf{K}_{\mathrm{gr}}\left(G\right)\subset\mathbf{K}\left(G\right)\]
of compact subgroups of \(G\) and of compact subsets of \(G\), respectively.

\begin{defn}
A {\bf random compact subgroup} of \(G\) is a standard probability space \(\left(W_{o},\nu_{o}\right)\) together with a Borel map \(\psi_{o}:\left(W_{o},\nu_{o}\right)\to\mathbf{K}_{\mathrm{gr}}\left(G\right)\).
\end{defn}

In defining the Krengel \(G\)-space associated with a random compact subgroup of \(G\), we use that for every compact subgroup \(K\) of \(G\), there is a unique \(G\)-invariant probability measure \(\kappa\) on \(G/K\), obtained as the pushforward of \(\lambda\) from \(G\) to \(G/K\) along the canonical projection, and, moreover, the measure class of this measure is unique (see Theorem~\ref{thm:meashom}(1) and Theorem~\ref{thm:haaruniq} in the \hyperref[Appendix: Measures on Homogeneous Spaces]{Appendix}).

\begin{con}
\label{cnst:krengel}
A {\bf Krengel \(G\)-space} is a nonsingular (probability) \(G\)-space
\[\left(X_{o},\mu_{o}\right)\]
constructed out of random compact subgroup \(\psi_{o}:\left(W_{o},\nu_{o}\right)\to\mathbf{K}_{\mathrm{gr}}\left(G\right)\) as follows:
\begin{itemize}
    \item The Borel \(G\)-space \(X_{o}\) is as in Construction~\ref{cnst:psispace},
    \[X_{o}\coloneqq \left\{\left(x,g\psi_{o}\left(w\right)\right):w\in W_{o},g\in G\right\}.\]
    \item The measure \(\mu_{o}\) is given by
    \[\mu_{o}=\int_{W_{o}}\delta_{\left\{ w\right\} }\otimes\kappa_{w}d\nu_{o}\left(w\right),\]
    where for each \(w\in W_{o}\), \(\kappa_{w}\) is a probability measure in the (unique) measure-class of the \(G\)-invariant measure on \(G/\psi_{o}\left(w\right)\).
\end{itemize}
\end{con}

Then \(\left(X_{o},\mu_{o}\right)\) is a nonsingular \(G\)-space (see Lemma~\ref{lem:standardsigma} below) and it satisfies the following properties:
\begin{enumerate}
    \item \(W_{o}\cong\left\{\left(w,\psi_{o}\left(w\right)\right):w\in W_{o}\right\}\) is a Borel transversal for \(E_{G}^{X_{o}}\).
    \item The stabilizers of points are conjugacy classes of \(\psi_{o}\) (Observation~\ref{obs:stab}).
    \item \(\left(W_{o},\nu_{o}\right)\) is the space of the ergodic components of \(\left(X_{o},\mu_{o}\right)\): a point \(w\in W_{o}\) corresponds to a compactly fibered coset \(G\)-space \(G/\psi_{o}\left(w\right)\).
\end{enumerate}

\renewcommand{\themainthm}{G}
\begin{mainthm}
\label{mthm:generaldiss}
A nonsingular \(G\)-space is totally dissipative iff it is isomorphic to the Krengel \(G\)-space associated with its random compact subgroup.
\end{mainthm}
\renewcommand{\themainthm}{\Alph{mainthm}}

\begin{rem}
\label{rmk:michael}
An alternative approach to define Krengel \(G\)-spaces was suggested to us by Michael Bj\"{o}rklund. For a random compact subgroup \(\psi_{o}:\left(W_{o},\nu_{o}\right)\to\mathbf{K}_{\mathrm{gr}}\left(G\right)\), define on \(W_{o}\times G\) the \(G\)-invariant Borel equivalence relation
\[E_{\psi_{o}}=\big\{ \left(\left(w,g\right),\left(w,g'\right)\right)\in\left(W_{o}\times G\right)^{2}:g\psi_{o}\left(w\right)=g'\psi_{o}\left(w\right)\big\},\]
and let \(\mathcal{I}_{\psi_{o}}\) be the \(\sigma\)-algebra of \(E_{\psi_{o}}\)-invariant sets. By Mackey's point-realization theorem, the Boolean \(G\)-space corresponding to \(\mathcal{I}_{\psi_{o}}\) admits a point-realization, i.e. a nonsingular \(G\)-space \(\left(X_{o},\mu_{o}\right)\) such that \(\mathcal{I}_{\psi_{o}}\) is realized as a factor map 
\[\left(W_{o}\times G,\nu_{o}\otimes\lambda\right)\to\left(X_{o},\mu_{o}\right).\]
It can be shown that \(\left(X_{o},\mu_{o}\right)\) is naturally a Krengel \(G\)-space.
\end{rem}

\section{Free actions: translation spaces}

Here we prove Theorem~\ref{mthm:krengel}.

\begin{thm}
For every essentially free nonsingular \(G\)-space \(\left(X,\mu\right)\), the following are equivalent:
\begin{enumerate}
    \item \(\left(X,\mu\right)\) is totally dissipative.
    \item \(E_{G}^{X}\) is essentially smooth and \(\mu\) admits a \(G\)-invariant equivalent measure.
    \item \(\left(X,\mu\right)\) is isomorphic to a translation \(G\)-space.
\end{enumerate}
\end{thm}

\begin{proof}
(1)\(\implies\)(2). This follows directly from Theorem~\ref{thm:disssmooth} and Proposition~\ref{prop:dissacim}.

(2)\(\implies\)(3). Since \(E_{G}^{X}\) is essentially smooth it has a Borel transversal \(W_{o}\subseteq X\). Consider the restriction of the action map to \(W_{o}\), namely
\[\Phi:W_{o}\times G\to X,\quad \Phi\left(w,g\right)=g.w.\]
Using the transversal property of \(W_{o}\) and that the action of \(G\) is free, \(\Phi\) is injective Borel map, so it admits a Borel inverse that we denote
\[\Phi^{-1}\left(x\right)=\left(\omega\left(x\right),\gamma\left(x\right)\right)\text{ where }\omega:X\to W_{o}\text{ and }\gamma:X\to G.\]
We claim that
\[\omega\left(g.x\right)=\omega\left(x\right)\text{ and }\gamma\left(g.x\right)=g\gamma\left(x\right)\text{ for all }x\in X,\,\,g\in G.\]
Indeed, the first identity is by the transversal property, and this implies
\[g\gamma\left(x\right).\omega\left(x\right)=g.x=\gamma\left(g.x\right).\omega\left(g.x\right)=\gamma\left(g.x\right).\omega\left(x\right),\]
which implies the second identity since the action is free.

Fix some invariant measure \(\eta_{o}\) equivalent to \(\mu\). Consider the measure \(\mu_{1}\coloneqq \eta_{o}\circ\Phi\). Then for every Borel sets \(A\subseteq W_{o}\) and \(B\subseteq G\), and every \(g\in G\),
\begin{align*}
&\mu_{1}\left(A\times g^{-1}B\right)\\
&\quad=\eta_{o}\left(\Phi\left(A\times g^{-1}B\right)\right)\\
&\quad=\eta_{o}\left(x\in X:\omega\left(x\right)\in A,g\omega\left(x\right)\in B\right)\\
&\quad=\eta_{o}\left(x\in X:\omega\left(g.x\right)\in A,\omega\left(g.x\right)\in B\right)\\
&\quad=\eta_{o}\left(x\in X:\omega\left(x\right)\in A,\omega\left(x\right)\in B\right)\\
&=\mu_{1}\left(A\times B\right).
\end{align*}
It follows that for every Borel set \(A\subseteq W_{o}\), the map \(B\mapsto\mu_{o}\left(A\times B\right)\) is a left invariant Borel \(\sigma\)-finite measure on \(G\), so it is \(\lambda\) up to a positive constant (see Theorem~\ref{thm:haaruniq}(2)). Thus, there is a positive constant \(\mu_{o}\left(A\right)\) such that
\[\mu_{1}\left(A\times B\right)=\mu_{o}\left(A\right)\lambda\left(B\right).\]
As \(\mu_{1}\) is a measure, \(A\mapsto\mu_{o}\left(A\right)\) defines a measure on \(W_{o}\) so we deduce (3).

(3)\(\implies\)(1). For a translation \(G\)-space attached to \(\left(W_{o},\nu_{o}\right)\), since it is measure preserving we may assume that \(\nabla\left(w,h\right)=1\) for every \(\left(w,h\right)\in W_{o}\times G\), so for every \(f\in L_{+}^{1}\left(W_{o}\times G,\nu_{o}\otimes\lambda\right)\) we have
\[S_{f}^{G}\left(w,h\right)=\int_{G}f\left(w,gh\right)d\lambda\left(g\right).\]
By the Fubini Theorem and the integrability of \(f\) we have
\begin{align*}
\int_{W_{o}}S_{f}^{G}\left(w,h\right)d\nu_{o}\left(w\right)	
&=\iint\nolimits_{W_{o}\times G}f\left(w,gh\right)d\nu_{o}\otimes\lambda\left(w,h\right)\\
&=\iint\nolimits_{W_{o}\times G}f\left(w,h\right)d\nu_{o}\otimes\lambda\left(w,h\right)<+\infty.
\end{align*}
It follows that \(S_{f}^{G}\left(w,h\right)<+\infty\) for \(\nu_{o}\otimes\lambda\)-a.e. \(\left(w,h\right)\in W_{o}\times G\), concluding that the translation \(G\)-space attached to \(\left(W_{o},\nu_{o}\right)\) is totally dissipative.
\end{proof}

\section{Ergodic actions: compactly fibered coset spaces}

Here we prove Theorem~\ref{mthm:glimmeffros}. Recall that in Construction~\ref{cnst:coset}, the stabilizers of a point \(hK\in G/K\) is the conjugation \(K^{h}=h^{-1}Kh\), thus the action is not essentially free as soon as \(K\) is nontrivial. As for dissipativity we have:

\begin{lem}
\label{lem:cosdiss}
A coset \(G\)-space is totally dissipative iff it is compactly fibered.
\end{lem}

\begin{proof}
In the nonsingular \(G\)-space \(\left(G/K,\kappa\right)\), all the stabilizers are compact exactly when \(K\) is compact. Thus, if \(\left(G/K,\kappa\right)\) is totally dissipative then from Theorem~\ref{thm:compstab} it follows that \(K\) is compact. Conversely, clearly \(E_{G}^{G/K}\) is smooth (it admits a one point Borel transversal), so if \(K\) is compact then every stabilizer is compact, so from Theorem~\ref{mthm:disssmooth} it follows that \(\left(G/K,\kappa\right)\) is totally dissipative.
\end{proof}

We can now prove Theorem~\ref{mthm:glimmeffros}:

\begin{proof}[Proof of Theorem~\ref{mthm:glimmeffros}]
By Lemma~\ref{lem:cosdiss}, a compactly fibered coset \(G\)-space is totally dissipative, so we show the converse. Let \(\left(X,\mu\right)\) be an ergodic totally dissipative nonsingular \(G\)-space, and we start by showing that it is essentially transitive. By Theorem~\ref{thm:smooth}, since \(E_{G}^{X}\) is smooth the space \(X/E_{G}^{X}\) is standard Borel and the quotient map \(\pi:X\rightarrow X/E_{G}^{X}\) is a Borel map. It is then sufficient to show that the pushforward measure \(\pi_{\ast}\mu\) on \(X/E_{G}^{X}\) is a Dirac measure. Indeed, if \(A,B\subseteq X/E_{G}^{X}\) are \(\pi_{\ast}\mu\)-positive disjoint Borel sets, then \(\pi^{-1}\left(A\right),\pi^{-1}\left(B\right)\) are \(\mu\)-positive disjoint Borel sets, and they are clearly \(G\)-invariant, a contradiction to ergodicity.

Now that \(\left(X,\mu\right)\) is essentially transitive, it is isomorphic as a Borel \(G\)-space to an orbit \(\left(G.x_{o},\mu\right)\) for some \(x_{o}\in X\), so the map \(q:G.x_{o}\to G/G_{x_{o}}\), \(q:g.X_{o}\mapsto gG_{x_{o}}\), is an isomorphism of the measure preserving \(G\)-spaces \(\left(G.x_{o},\mu\right)\) and \(\left(G/G_{x_{o}},q_{\ast}\mu\right)\). By Theorem~\ref{thm:compstab}, \(G_{x_{o}}\) is a compact subgroup of \(G\), so by Theorem~\ref{thm:haaruniq}(1) there is a unique measure class of quasi-invariant measures on \(G/G_{x_{o}}\), namely \(q_{\ast}\mu\) is equivalent to the measure \(\kappa\) that forms the compactly fibered coset \(G\)-space \(\left(G/G_{x_{o}},\kappa\right)\). Thus, the nonsingular \(G\)-spaces \(\left(X,\mu\right)\) and \(\left(G/G_{x_{o}},\kappa\right)\) are isomorphic.
\end{proof}

\section{General actions: Krengel spaces}

Here we prove first that Construction~\ref{cnst:krengel} of Krengel \(G\)-spaces is well-founded, and then we prove Theorem~\ref{mthm:generaldiss}. Let us first note that:

\begin{lem}
\label{lem:fibequiv}
Let \(\left(X,\eta\right)\) be a measure space, and let \(\mu,\nu\) be measures on \(X\) such that \(\mu=\int_{X}\mu_{x}d\eta\left(x\right)\) and \(\nu=\int_{X}\nu_{x}d\eta\left(x\right)\). If \(\mu_{x}\sim\nu_{x}\) for \(\eta\)-a.e. \(x\in X\) then \(\mu\sim\nu\).
\end{lem}

\begin{proof}
Let \(A\subseteq X\) be an arbitrary Borel set. Then \(\mu\left(A\right)=0\) iff \(\mu_{x}\left(A\right)=0\) for \(\eta\)-a.e. \(x\in X\), iff \(\nu_{x}\left(A\right)=0\) for \(\eta\)-a.e. \(x\in X\), iff \(\nu\left(A\right)=0\).
\end{proof}

\begin{lem}
\label{lem:standardsigma}
Construction~\ref{cnst:krengel} yields a (standard) nonsingular \(G\)-space.
\end{lem}

\begin{proof}
In Lemma~\ref{lem:standard} we showed that \(X_{o}\) is a standard Borel space. For the nonsingularity, for every \(g\in G\), using that each \(\kappa_{w}\) is quasi-invariant and Lemma~\ref{lem:fibequiv},
\[\mu_{o}\circ g=\int_{W_{o}}\delta_{\left\{ w\right\} }\otimes\left(\kappa_{w}\circ g\right)d\nu_{o}\left(w\right)\sim\int_{W_{o}}\delta_{\left\{ w\right\} }\otimes\kappa_{w}d\nu_{o}\left(w\right)=\mu_{o}.\qedhere\]
\end{proof}

\begin{proof}[Proof of Theorem~\ref{mthm:generaldiss}]
For every Krengel \(G\)-space, by construction all stabilizers are compact and, as demonstrated in Theorem~\ref{thm:smoothactions}, its associated orbit equivalence relation is smooth, and therefore it is totally dissipative. We then prove the converse. Let \(\left(X,\mu\right)\) be a totally dissipative nonsingular \(G\)-space. The stabilizer map \(\psi:X\to\mathbf{K}_{\mathrm{gr}}\left(G\right)\), \(\psi\left(x\right)=G_{x}\), is Borel by Theorem~\ref{thm:miller}, and in light of Theorem~\ref{thm:compstab} its image is indeed in \(\mathbf{K}_{\mathrm{gr}}\left(G\right)\). Using Theorem~\ref{mthm:disssmooth}, there exists a \(\mu\)-conull set \(X_{1}\) such that \(E_{G}^{X_{1}}\) is smooth. By Proposition~\ref{prop:invariance}(2), there is a \(\mu\)-conull set \(X_{2}\subseteq X_{1}\) for which \(G.X_{2}\) is a Borel (\(G\)-invariant) set. Note that any Borel transversal of \(X_{1}\) is then also a Borel transversal of \(G.X_{2}\), thus \(E_{G}^{X_{2}}\) is again smooth. We then assume for simplicity that \(X=X_{2}\) and \(E_{G}^{X}\) admits a Borel transversal. Let the Borel \(G\)-space
\[X_{o}\coloneqq \left\{\left(w,g\psi\left(w\right)\right):w\in W,g\in G\right\},\]
and by Theorem~\ref{thm:smoothactions} there is an isomorphism of Borel \(G\)-spaces
\[\iota:X\to X_{o},\]
satisfying \(\iota\left(G.w\right)=\left\{w\right\}\times G/\psi\left(w\right)\) for every \(w\in W\). We can now define a random compact subgroup \(\psi_{o}:\left(W_{o},\nu_{o}\right)\to\mathbf{K}_{\mathrm{gr}}\left(G\right)\) as follows. First define
\[W_{o}\coloneqq \iota\left(W\right)=\left\{ \left(w,\psi\left(w\right)\right):w\in W\right\},\]
and since \(W_{o}\subseteq X_{o}=\iota\left(X\right)\) forms a Borel transversal, pick a Borel selector \(s:X\to W\). Since we only care of nonsingular isomorphism, we may assume that \(\mu\) is a probability measure, so let \(\nu\coloneqq s_{\ast}\mu\) on \(W\) and define
\[\nu_{o}\coloneqq \iota_{\ast}\nu.\]
Finally, define \(\psi_{o}:W_{o}\to\mathbf{K}_{\mathrm{gr}}\left(G\right)\) by
\[\psi_{o}\left(w,\psi\left(w\right)\right)=\psi\left(w\right),\quad w\in W.\]

Letting \(\left(X_{o},\mu_{o}\right)\) be the associated Krengel \(G\)-space, we will finish the proof by showing that \(\left(X,\mu\right)\) and \(\left(X_{o},\mu_{o}\right)\) are isomorphic as nonsingular \(G\)-spaces, by showing that \(\iota_{\ast}\mu\sim\mu_{o}\). Recall the ergodic decomposition theorem as presented in Section~\ref{sct:ergdecomp}, and write the ergodic decomposition of \(\left(X,\mu\right)\) as
\[\mu=\int_{X}\mu_{x}d\mu\left(x\right)=\int_{X}\mu_{s\left(x\right)}d\mu\left(x\right)=\int_{W}\mu_{w}d\nu\left(w\right),\]
where we used that \(x\mapsto\mu_{x}\) is a \(G\)-invariant map hence constant along orbits. For each \(w\in W\), the ergodic component \(\left(X,\mu_{w}\right)\) is a totally dissipative ergodic nonsingular \(G\)-space, hence by Theorem~\ref{mthm:glimmeffros} it is isomorphic to the compactly fibered \(G\)-space \(\left(G/\psi\left(w\right),\kappa_{w}\right)\), thus \(\mu_{w}\sim\kappa_{w}\). Since this holds for every \(w\in W\), from Lemma~\ref{lem:fibequiv} it follows that
\[\mu=\int_{W}\mu_{w}d\nu\left(w\right)\sim\int_{W}\kappa_{w}d\nu\left(w\right).\]
Noting that \(\iota_{\ast}\kappa_{w}=\delta_{\left\{ w\right\} }\otimes\kappa_{w}\) for every \(w\in W\), we obtain
\[\iota_{\ast}\mu\sim\int_{W}\iota_{\ast}\kappa_{w}d\nu\left(w\right)=\int_{W_{o}}\delta_{\left\{ w\right\} }\otimes\kappa_{w}d\nu_{o}\left(w\right)=\mu_{o}.\qedhere\]
\end{proof}

\supersection{Further properties}

\section{Maharam's recurrence theorem}

The following theorem was proved in \cite[Lemma 5.5]{maharam1964incompressible} for a single transformation (see \cite[Theorem 1.1.7]{aaronson1997introduction}) and in \cite[Proposition 1.2]{kaimanovich1994ergodicity} for countable groups.

\begin{thm}[Maharam's recurrence theorem]
\label{thm:maharamrecurrence}
Let \(G\) be an lcsc non-compact group and \(\left(X,\mu\right)\) a measure preserving \(G\)-space. If there is a Borel set \(A_{o}\subseteq X\) with
\[0<\mu\left(A_{o}\right)<+\infty\text{ and }\lambda\left(R_{A_{o}}\left(x\right)\right)=+\infty\text{ for }\mu\text{-a.e. }x\in X,\]
then \(\left(X,\mu\right)\) is conservative.
\end{thm}

\begin{proof}
Suppose that \(T\subseteq X\) is a Haar-transient set. With the sets
\[T_{r}\coloneqq \left\{ x\in T:\lambda\left(R_{T}\left(x\right)\right)\leq r\right\} ,\quad r>0,\]
as in the proof of Proposition~\ref{prop:recurrence}, the computation \eqref{eq:maharam} for \(f=1_{A_{o}}\) (which holds true for general nonnegative functions) shows that for every \(r>0\),
\[\int_{T_{r}}S_{1_{A_{o}}}^{G}\left(x\right)d\mu\left(x\right)\leq r\mu\left(A_{o}\right)<+\infty\text{, so }S_{1_{A_{o}}}^{G}\left(x\right)<+\infty\text{ for }\mu\text{-a.e. }x\in T_{r}.\]
However, as we are in the measure preserving case and by the assumption,
\[S_{1_{A_{o}}}^{G}\left(x\right)=\lambda\left(R_{A_{o}}\left(x\right)\right)=+\infty,\]
hence \(\mu\left(T_{r}\right)=0\). Since \(T_{r}\nearrow T\) as \(r\nearrow +\infty\) we deduce that \(\mu\left(T\right)=0\). Then by the recurrence theorem~\ref{mthm:recurrence}, \(\left(X,\mu\right)\) is conservative.
\end{proof}

\section{Maximal transient set}

For a countable group \(G\), it is known that there exists a wandering set \(W_{\max}\subseteq X\) with \(\mathcal{D}=G.W_{\max}\) modulo \(\mu\) (see \cite[Proposition 1.6.1]{aaronson1997introduction}). The following is the analog for transient sets, and its proof is essentially due to Aaronson \cite[Proposition 1.6.1]{aaronson1997introduction}.

\begin{prop}
Let \(\left(X,\mu\right)\) be a nonsingular \(G\)-space. There exists a Haar-transient set \(T_{\max}\subseteq X\) with \(\mathcal{D}=G.T_{\max}\) modulo \(\mu\). Moreover, when \(\mu\left(\mathcal{D}\right)>0\) one may assume further that \(\mu\left(T_{\max}\right)>0\).
\end{prop}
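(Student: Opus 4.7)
My plan is to start from the decomposition $\mathcal{D} = \bigsqcup_n T_n$ modulo $\mu$ given by applying the Exhaustion Lemma \ref{Lemma: The Exhaustion Lemma} to the hereditary family $\mathfrak{T}$ of transient sets, and then to \emph{orbit-disjointize} it: pass to Borel subsets $T_n' \subset T_n$ whose $G$-saturations are pairwise disjoint modulo $\mu$. Once this is arranged, $T_{\max} := \bigsqcup_n T_n'$ will be transient, because $R_{T_{\max}}(x) = \bigsqcup_n R_{T_n'}(x)$ and for $\mu$-a.e.\ $x$ only one summand is nonempty (the one corresponding to the unique $n$ with $x \in G.T_n'$), so $\lambda(R_{T_{\max}}(x)) = \lambda(R_{T_{n(x)}'}(x)) < +\infty$ by transience of $T_{n(x)}$.

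To produce the $T_n'$, I would define inductively $T_n' := T_n \setminus U_{n-1}$, where $U_{n-1}$ is a genuinely $G$-invariant Borel set equal modulo $\mu$ to $G.(T_1 \cup \cdots \cup T_{n-1})$. The saturation is truly $G$-invariant and analytic, hence $\mu$-measurable, so it agrees modulo $\mu$ with a Borel set; this Borel set is $\mu$-almost $G$-invariant by quasi-invariance, and Theorem \ref{Theorem: Becker} upgrades it to an honestly $G$-invariant Borel set $U_{n-1}$. The true (not merely modulo-$\mu$) invariance of $U_{n-1}$ is essential: orbits that hit $T_n' \subset X \setminus U_{n-1}$ must stay in $X \setminus U_{n-1}$, so $G.T_n' \cap U_{n-1} = \emptyset$ on the nose, whereas $G.T_m' \subset U_{n-1}$ modulo $\mu$ for $m < n$, yielding $\mu(G.T_n' \cap G.T_m') = 0$.

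Verifying $G.T_{\max} = \mathcal{D}$ modulo $\mu$ is then a matter of bookkeeping: for $\mu$-a.e.\ $x \in \mathcal{D}$ the minimal index $n(x) := \min\{n : G.x \cap T_n \neq \emptyset\}$ is well-defined, the orbit $G.x$ is disjoint from $U_{n(x)-1}$ (outside a countable union of $\mu$-null $G$-invariant sets), and therefore meets $T_{n(x)} \setminus U_{n(x)-1} = T_{n(x)}'$. For the positivity clause, when $\mu(\mathcal{D}) > 0$ we may relabel so that $\mu(T_1) > 0$; then $T_1' = T_1$ since $U_0 = \emptyset$, and $\mu(T_{\max}) \geq \mu(T_1) > 0$.

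The main obstacle is the well-known failure of Borelness of $G$-saturations for uncountable $G$: the naive formula $T_n' = T_n \setminus G.(T_1 \cup \cdots \cup T_{n-1})$ produces only analytic sets, and without a Borel set that is \emph{truly} (rather than modulo $\mu$) $G$-invariant, the disjointness of the $G.T_n'$ would only hold up to null errors that might accumulate. Navigating this via the two-step replacement (analytic $\to$ Borel modulo $\mu$, then Becker $\to$ Borel $G$-invariant) is the heart of the construction; once available, the rest is a clean inductive argument.
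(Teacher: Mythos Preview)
Your proof is correct, and it takes a genuinely different route from the paper's. The paper follows Aaronson's greedy exhaustion: after normalizing $\mu$ to a probability measure, it picks at each stage a transient set $T_{n+1}\subset\mathcal{D}\setminus G.\left(T_{1}\sqcup\dotsm\sqcup T_{n}\right)$ of near-maximal measure, so that the $T_{n}$ have genuinely pairwise disjoint $G$-saturations by construction; convergence of $\mathcal{D}\setminus G.T_{\max}$ to a null set is then forced by $\sum\alpha_{n}\leq 2\mu(X)<\infty$. Your approach instead starts from any Exhaustion Lemma decomposition $\mathcal{D}=\bigsqcup_{n}T_{n}$ and retrofits orbit-disjointness by excising the invariant Borel hulls $U_{n-1}$ produced via Theorem~\ref{Theorem: Becker}. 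The paper's argument is more self-contained (it does not invoke Becker's theorem, though it does handle analytic saturations implicitly when choosing $T_{n+1}$ inside a coanalytic set), while yours is more explicit about the Borel/analytic bookkeeping and avoids the greedy measure estimate entirely. Both yield the same $T_{\max}$ up to the usual ambiguity, and your careful handling of the null $G$-invariant discrepancy sets $U_{n-1}\triangle G.(T_{1}\cup\dotsm\cup T_{n-1})$ is exactly what is needed to make the ``minimal index $n(x)$'' argument go through.
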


\begin{proof}
We may assume that \(\mu\) is a probability measure. Recall the hereditary collection \(\mathfrak{T}\) of Haar-transient sets of \(\left(X,\mu\right)\). Let us proceed in an inductive way as follows. First, if \(\mu\left(\mathcal{D}\right)=0\) set \(T_{\max}\coloneqq \emptyset\). Otherwise, set
\[\alpha_{1}\coloneqq \sup\left\{ \mu\left(T\right):T\in\mathfrak{T}\right\}>0,\]
and pick \(T_{1}\in\mathfrak{T}\) with \(\mu\left(T_{1}\right)>\alpha_{1}/2\). Suppose \(T_{1},\dots,T_{n}\in\mathfrak{T}\) were defined. If \(\mu\left(\mathcal{D}\setminus G.\left(T_{1}\sqcup\dotsm\sqcup T_{n}\right)\right)=0\) set \(T_{\max}=:T_{1}\sqcup\dotsm\sqcup T_{n}\). Otherwise, set
\[\alpha_{n}\coloneqq \sup\left\{\mu(T):T\in\mathfrak{T},T\subseteq\mathcal{D}\setminus G.\left(T_{1}\sqcup...\sqcup T_{n}\right)\right\}.\]
Either this process halts, and then \(T_{\mathrm{max}}\) is defined, or that it is indefinite, in which case set \(T_{\max}\coloneqq T_{1}\sqcup T_{2}\sqcup\dotsm\). We claim that \(T_{\max}\) is a maximal Haar-transient set. For every \(x\in X\), either \(R_{T_{\max}}\left(x\right)=\emptyset\) or \(R_{T_{\max}}\left(x\right)=R_{T_{n_{x}}}\left(x\right)\) for some \(n_{x}\in\mathbb{N}\), hence \(\lambda\left(R_{T_{\max}}\left(x\right)\right)<+\infty\) and \(T_{\max}\in\mathfrak{T}\). We note that for every \(g\in G\), since \(\mu\left(T_{\max}\right)>0\) also \(\mu\left(g.T_{\max}\right)>0\), and since \(g.T_{\max}\in\mathfrak{T}\) we have \(g.T_{\max}\subseteq\mathcal{D}\) modulo \(\mu\). This implies that \(S.T_{\max}\subseteq\mathcal{D}\) modulo \(\mu\) for every countable set \(S\subset G\). By the continuity of the mapping \(G\to\left[0,1\right]\), \(g\mapsto\mu\left(g.T_{\mathrm{max}}\right)\), the separability of \(G\) and using that \(\mu\left(T_{\max}\right)>0\), we deduce that \(G.T_{\max}\subseteq\mathcal{D}\) modulo \(\mu\). Finally, since \(T_{1},T_{2},\dotsc\) are disjoint, we have
\[\sum\nolimits_{n\in\mathbb{N}}\alpha_{n}=2\cdot\sum\nolimits_{n\in\mathbb{N}}\mu\left(T_{n}\right)\leq2\cdot\mu\left(X\right)<+\infty,\]
so we have \(\mu\left(\mathcal{D}\backslash G.T_{\max}\right)\leq\alpha_{n}\xrightarrow[n\to\infty]{}0\), thus \(\mathcal{D}=G.T_{\max}\) modulo \(\mu\).
\end{proof}

\section{The ergodic decomposition}
\label{sct:ergdecomp}

Another consequence of the recurrence theorem~\ref{mthm:recurrence} is that dissipativity is preserved when passing to ergodic components. For a nonsingular \(G\)-space \(\left(X,\mu\right)\) with Radon--Nikodym cocycle \(\nabla\), let 
\[\mathcal{E}_{\nabla}^{G}\left(X\right)\]
be the space of measures on \(X\), with respect to which the action of \(G\) is nonsingular and ergodic, and whose Radon--Nikodym cocycle is \(\nabla\) almost everywhere. Thus, \(\upsilon\in\mathcal{E}_{\nabla}^{G}\left(X\right)\) when \(\left(X,\upsilon\right)\) is an ergodic nonsingular \(G\)-space and
\[\frac{d\upsilon\circ g}{d\upsilon}=\nabla_{g}\quad\mu\text{-a.e. for every }g\in G.\]
By the ergodic decomposition theorem (see \cite[Theorem 1.1 \& 5.2]{greschonig2000ergodic}),\footnote{While this is formulated for nonsingular probability \(G\)-spaces, one can derive the general case by passing to an equivalent probability measure.} there is a \(G\)-invariant Borel map \(X\to\mathcal{E}_{\nabla}^{G}\left(X\right)\), \(x\mapsto\mu_{x}\), such that the measures \(\{\mu_{x}:x\in X\}\) are pairwise singular, and for every Borel set \(A\subseteq X\), the map
\[X\to\left[0,+\infty\right],\quad x\mapsto\mu_{x}\left(A\right),\]
is Borel, and 
\[\mu\left(A\right)=\int_{X}\mu_{x}\left(A\right)d\mu\left(x\right).\]
The elements of \(\left\{\mu_{x}:x\in X\right\}\) are referred to as the {\bf ergodic components} of \(\left(X,\mu\right)\).

\begin{prop}
\label{prop:dissergodicdeco}
A nonsingular \(G\)-space is totally dissipative iff essentially all of its ergodic components are totally dissipative.
\end{prop}

\begin{proof}
For an arbitrary Borel set \(A\subseteq X\) let
\[A_{\infty}\coloneqq \left\{x\in X:\lambda\left(R_{A}\left(x\right)\right)=+\infty\right\}.\]
With the ergodic decomposition of \(\left(X,\mu\right)\), we have \(\mu\left(A_{\infty}\right)=0\) iff \(\mu_{x}\left(A_{\infty}\right)=0\) for \(\mu\)-a.e. \(x\in X\), so the proposition follows from the recurrence theorem~\ref{mthm:recurrence}.
\end{proof}

\section{Subactions of closed subgroups}

The forthcoming proposition determines the Hopf decomposition for the action of a closed subgroup of the acting group. This was first done by Halmos, who showed that the Hopf decomposition of a single transformation \(S\) coincides with that of its self-iteration \(S^{p}\), for any \(p\in\mathbb{N}\), and then Krengel observed the analog for lattices \cite[Lemma 2.7]{krengel1969II} (see \cite[Corollary 1.1.4 \& Theorem 1.6.4]{aaronson1997introduction}). The following proposition is the same as \cite[Proposition 4.36]{arano2021ergodic}.

\begin{prop}
\label{prop:subgroup}
Let \(\left(X,\mu\right)\) be a nonsingular \(G\)-space. For every closed subgroup \(H\) of \(G\) the following hold.
\begin{enumerate}
    \item If \(\left(X,\mu\right)\) is totally dissipative, then it is also totally dissipative with respect to \(H\) (in the subaction of \(H\)).
    \item If \(H\) is further a lattice, then the converse of (1) is also true, and the Hopf decomposition with respect to \(G\) coincides with the one with respect to \(H\).
\end{enumerate}
\end{prop}

\begin{proof}
Part (1). By Proposition~\ref{prop:dissergodicdeco} it is sufficient to show that every ergodic component of \(\left(X,\mu\right)\) is dissipative as a nonsingular \(H\)-space, and by Theorem~\ref{mthm:glimmeffros} every such ergodic component is isomorphic to a compactly fibered coset \(G\)-spaces \(\left(G/K,\kappa\right)\). Note that the stabilizers of the action of \(H\) on \(G/K\) are of the form \(H\cap K^{g}\) for some \(g\in G\), hence are all compact. Also, by Theorem~\ref{thm:compstabsmooth} in the \hyperref[Appendix: Measures on Homogeneous Spaces]{Appendix}, the orbit equivalence relation \(E_{G/K}^{H}\) is smooth. Then from Theorem~\ref{mthm:disssmooth} it follows that \(\left(G/K,\kappa\right)\) is totally dissipative also as a nonsingular \(H\)-space.

Part (2). When \(H\) is a lattice we may fix a fundamental domain \(\Omega\subset G\) for \(H\), so that \(0<\lambda\left(\Omega\right)<+\infty\) and the translations \(\left\{\omega\Omega:\omega\in H\right\}\) of \(\Omega\) are pairwise disjoint and \(G=\bigcup_{\omega\in H}\omega\Omega\). Fix any \(f\in L_{+}^{1}\left(X,\mu\right)\) and let
\[f_{\Omega}:X\to\mathbb{R}_{>0},\quad f_{\Omega}\left(x\right)\coloneqq \int_{\Omega}\nabla_{g}\left(x\right)f\left(g.x\right)d\lambda\left(g\right).\]
Note that \(f_{\Omega}\in L_{+}^{1}\left(X,\mu\right)\) since \(\lambda\left(\Omega\right)<+\infty\). We now have the relation
\begin{align*}
S_{f}^{G}\left(x\right)
&=\sum\nolimits_{\omega \in H}\int_{\omega\Omega}\nabla_{g}\left(x\right)f\left(g.x\right)d\lambda\left(g\right)\\
&=\sum\nolimits_{\omega \in H}\int_{\Omega}\nabla_{g\omega}\left(x\right)f\left(g\omega.x\right)d\lambda\left(g\right)\\
&=\sum\nolimits_{\omega \in H}\nabla_{\omega}\left(x\right)\int_{\Omega}\nabla_{g}\left(\omega.x\right)f\left(g\omega.x\right)d\lambda\left(g\right)\\
&=\sum\nolimits_{\omega \in H}\nabla_{\omega}\left(x\right)f_{\Omega}\left(\omega.x\right)=S_{f_{\Omega}}^{H}\left(x\right),
\end{align*}
where the last equality is nothing but the very definition of the averaging transform \(S^{H}\) for the discrete countable acting group \(H\) with its Haar (counting) measure. In view of Theorem~\ref{mthm:hopfdeco}, this identity completes the proof.
\end{proof}

\section{Invariant measures and the positive--null decomposition}

Here we deal with another general decomposition of nonsingular \(G\)-spaces, the \emph{positive--null decomposition}, regarding whether a nonsingular \(G\)-space \(\left(X,\mu\right)\) admits a \(G\)-invariant measure absolutely continuous with respect to \(\mu\). This problem is fundamental in ergodic theory and it is of great interest already for a single transformation (see the bibliographical notes in \cite[\S3.4, pp. 144--146, \S6.3, pp. 220--221]{krengel1985ergodic}, the many references in \cite{danilenko2023ergodic}, and \cite[\S10]{nadkarni2013basic}).

\begin{defn}
Let \(\left(X,\mu\right)\) be a nonsingular \(G\)-space. A Borel \(\sigma\)-finite measure is called {\sc a.c.i.m} of \(\left(X,\mu\right)\) if it is both \(G\)-invariant and absolutely continuous with respect to \(\mu\). A probability {\sc a.c.i.m} will be called  {\sc a.c.i.p}.
\end{defn}

We emphasize that an {\sc a.c.i.m} of \(\left(X,\mu\right)\) need not be equivalent to \(\mu\) but merely absolutely continuous with respect to \(\mu\). We also stress that an {\sc a.c.i.m} is finite or infinite, thus every {\sc a.c.i.p} is in particular an {\sc a.c.i.m}. We follow the convention that all finite measures are normalized to be probability measures. The starting point is the following classical theorem which can be proved in many ways (it is a particular case of Theorem~\ref{thm:maharamrecurrence}).

\begin{thm}[Poincar\'{e} recurrence theorem]
\label{thm:poincareacip}
Let \(G\) be an lcsc non-compact group. Then every totally dissipative nonsingular \(G\)-space admits no {\sc a.c.i.p}.
\end{thm}

\begin{prop}
\label{prop:dissacim}
For every nonsingular \(G\)-space \(\left(X,\mu\right)\), the dissipative part \(\mathcal{D}\) admits an {\sc a.c.i.m} equivalent to \(\mu\mid_{\mathcal{D}}\).
\end{prop}

\begin{proof}
Fix any \(f_{0}\in L_{+}^{1}\left(X,\mu\right)\) and define the function
\[f_{0}^{\ast}:X\to\mathbb{R}_{>0},\quad f_{0}^{\ast}\left(x\right)\coloneqq S_{f_{0}}^{G}\left(x\right)=\int_{G}\nabla_{g}\left(x\right)f_{0}\left(g.x\right)d\lambda\left(g\right).\]
Assuming for simplicity that \(X=\mathcal{D}\), we have \(f_{0}^{\ast}<+\infty\) on a \(\mu\)-conull set, so define a measure \(\eta_{0}\) on \(X\) by
\[d\eta_{0}\left(x\right)=f_{0}^{\ast}\left(x\right)d\mu\left(x\right).\]
Since \(f_{0}^{\ast}\) is positive on a \(\mu\)-conull set, \(\eta_{0}\) is equivalent to \(\mu\). To show that \(\eta_{0}\) is \(G\)-invariant, note that by the formula \eqref{eq:nabla}, for every Borel function \(f_{1}:X\to\mathbb{R}_{\geq 0}\),
\begin{align*}
\int_{X}f_{1}\left(x\right)d\eta_{0}\left(x\right)
&=\iint\nolimits_{G\times X}\nabla_{g}\left(x\right)f_{0}\left(g.x\right)f_{1}\left(x\right)d\lambda\otimes\mu\left(g,x\right)\\
&=\iint\nolimits_{G\times X}f_{0}\left(x\right)f_{1}\left(g^{-1}.x\right)d\lambda\otimes\mu\left(g,x\right).
\end{align*}
In then follows by the invariance of the Haar measure together with the Fubini Theorem that, for an arbitrary \(h\in G\), if we replace \(f_{1}\) by \(f_{1}\circ h\) this integral remains unchanged, namely \(\eta_{0}\) is \(G\)-invariant.
\end{proof}

The Poincar\'{e} recurrence theorem~\ref{thm:poincareacip} asserts that only the conservative part may support an {\sc a.c.i.p}. It then can be one's goal to decompose the conservative part further according to the existence of an {\sc a.c.i.p}. The positive--null decomposition for \(G=\mathbb{Z}^{d}\) is classical \cite[\S3.4, Theorem 4.6, \S6.3, Theorem 3.9]{krengel1985ergodic}, and for general lcsc groups it can be derived using the machinery of {\it weakly wandering functions} (see \cite[\S3.4; \S3.5 Theorem 4.9; \S6.3 Theorem 3.10]{krengel1985ergodic}, \cite{grabarnik1995singular}, \cite[\S4]{avraham2023symmetric}). The following is a refined version that takes into account also infinite measures.

\begin{thm}[Positive--null decomposition]
\label{mthm:posnulldeco}
Let \(\left(X,\mu\right)\) be a nonsingular \(G\)-space. There exists an essentially unique \(G\)-invariant decomposition
\[X=\mathcal{P}_{1}\sqcup\mathcal{P}_{\infty}\sqcup\mathcal{N},\]
such that:
\begin{enumerate}
    \item \(\left(\mathcal{P}_{1},\mu\mid_{\mathcal{P}_{1}}\right)\), unless \(\mu\)-null, admits an equivalent {\sc a.c.i.p}.
    \item \(\left(\mathcal{P}_{\infty},\mu\mid_{\mathcal{P}_{\infty}}\right)\), unless \(\mu\)-null, admits an equivalent {\sc a.c.i.m} but no {\sc a.c.i.p}.
    \item \(\left(\mathcal{N},\mu\mid_{\mathcal{N}}\right)\) admits no {\sc a.c.i.m}.
\end{enumerate}
\end{thm}

It is important to note that \(\mathcal{P}_{1}\) may support an infinite {\sc a.c.i.m} in addition to an {\sc a.c.i.p}. Thus, the part
\[\mathcal{P}\coloneqq \mathcal{P}_{1}\sqcup\mathcal{P}_{\infty},\]
ought to be the maximal part of \(\left(X,\mu\right)\) that supports an {\sc a.c.i.m} (finite or infinite). Our approach in proving Theorem~\ref{mthm:posnulldeco} is the following: in the first stage we decompose \(X=\mathcal{P}\sqcup\mathcal{N}\), with the property that \(\mathcal{P}\) supports an {\sc a.c.i.m} and \(\mathcal{N}\) does not support an {\sc a.c.i.m}, and in the second stage we decompose \(\mathcal{P}=\mathcal{P}_{1}\sqcup\mathcal{P}_{\infty}\), with the property that \(\mathcal{P}_{1}\) fully supports an {\sc a.c.i.p} and \(\mathcal{P}_{\infty}\) does not support an {\sc a.c.i.p}. The relations of the positive--null decomposition with the Hopf decomposition are illustrated in Figure~\ref{Figure: decompositions}.

\begin{figure}\captionsetup{font={scriptsize,stretch=1}}
    \centering
    \begin{tikzpicture}
        \pgfmathsetseed{71}

        \node[randomcircle=5cm](A) at (0,0){};

        \path[name path=hline] (A.0) -- (A.200);
        \path[name path=curve] (A.10) .. controls +(170:1) and +(120:3) .. (A.-60);

        \path[name intersections={of=hline and curve, by=I}];

        \draw[randomline, name intersections={of=hline and curve, by=I}] (A.200) -- (I);
        \draw[randomline] (A.120) -- (I);
        \draw[randomline] (A.10) .. controls +(205:1) and +(115:3) .. (A.-60);

        \node at (-1.3, 0.5) {\(\mathcal{C}\cap\mathcal{P}_{1}\)};
        \node at (0.9, 1.2) {\(\mathcal{C}\cap\mathcal{P}_{\infty}\)};
        \node at (1.4, -0.8) {\(\mathcal{D}=\mathcal{D}\cap\mathcal{P}_{\infty}\)};
        \node at (-0.6, -1.3) {\(\mathcal{C}\cap\mathcal{N}\)};

        \end{tikzpicture}

    \caption[.]{The relations between the Hopf decomposition \(X=\mathcal{C}\sqcup\mathcal{D}\) and the positive--null decomposition \(X=\mathcal{P}_{1}\sqcup\mathcal{P}_{\infty}\sqcup\mathcal{N}\).\\ 
    Unless \(G\) is compact, by the Poincar\'{e} recurrence theorem \ref{thm:poincareacip} and by Proposition~\ref{prop:dissacim}, we have \(\mathcal{D}\subseteq\mathcal{P}_\infty\).}
\label{Figure: decompositions}
\end{figure}

The classical Neveu decomposition deals with {\sc a.c.i.p}'s and thus, in our terminology, it refers to the coarser decomposition
\[X=\mathcal{P}_{1}\sqcup X\backslash\mathcal{P}_{1}.\]
Hajian and Ito characterized the Neveu decomposition as follows. A Borel set \(W\) is {\bf weakly wandering} if there exists an infinite set \(S\subseteq G\) such that \(\left\{s.W:s\in S\right\}\) are pairwise disjoint. The collection of weakly wandering sets is hereditary, so the following theorem follows from \cite[Theorem 1]{hajian1969weakly} with the exhaustion lemma~\ref{lem:exhaust}.

\begin{thm}[Hajian--Ito]
Let \(\left(X,\mu\right)\) be a nonsingular \(G\)-space. The decomposition \(X=\mathcal{P}_{1}\sqcup\left(X\backslash\mathcal{P}_{1}\right)\) is determined by:
\begin{enumerate}
    \item \(\mathcal{P}_{1}\) contains no weakly wandering set modulo \(\mu\).
    \item Every \(\mu\)-positive set in \(X\backslash\mathcal{P}_{1}\), if exists at all, contains a \(\mu\)-positive weakly wandering subset.
\end{enumerate}
\end{thm}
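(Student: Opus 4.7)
The plan is to combine the Exhaustion Lemma \ref{Lemma: The Exhaustion Lemma} with the classical Hajian--Ito theorem from \cite{hajian1969weakly}. I would begin by noting that the collection $\mathfrak{W}$ of weakly wandering Borel subsets of $X$ is manifestly hereditary (every Borel subset of a weakly wandering set is weakly wandering with the same witness $S\subset G$) and is also $G$-invariant as a class: if $\{s.W\}_{s\in S}$ are disjoint for an infinite $S\subset G$, then $\{sg^{-1}.(g.W)\}_{s\in S}$ are disjoint, so $g.W$ is weakly wandering with witness $Sg^{-1}$. Applying Lemma \ref{Lemma: The Exhaustion Lemma}, there exists a measurable union $\mathcal{U}\subset X$ of $\mathfrak{W}$; by the $G$-invariance of $\mathfrak{W}$ the set $\mathcal{U}$ is $\mu$-almost $G$-invariant, and via Theorem \ref{Theorem: Becker} we may replace it with a genuinely $G$-invariant Borel set modulo $\mu$.

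Setting $\mathcal{P}:=X\setminus\mathcal{U}$, the two properties of a measurable union immediately yield both items of the theorem for the pair $\left(\mathcal{P},X\setminus\mathcal{P}\right)$: every weakly wandering subset of $\mathcal{P}$ lies in $\mathcal{U}$ modulo $\mu$, hence is $\mu$-null, and every $\mu$-positive Borel subset of $X\setminus\mathcal{P}=\mathcal{U}$ contains a $\mu$-positive element of $\mathfrak{W}$. To identify $\mathcal{P}$ with the Neveu part $\mathcal{P}_{1}$ of Theorem \ref{Main theorem: Positive--Null Decomposition}, I would apply the classical Hajian--Ito theorem to the nonsingular $G$-space $\left(\mathcal{P},\mu\mid_{\mathcal{P}}\right)$, which contains no $\mu$-positive weakly wandering set, to produce an equivalent {\sc a.c.i.p}; this gives $\mathcal{P}\subset\mathcal{P}_{1}$ modulo $\mu$. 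Conversely, if $\nu\sim\mu\mid_{\mathcal{P}_{1}}$ is an {\sc a.c.i.p}\ on $\mathcal{P}_{1}$ and $W\subset\mathcal{P}_{1}$ is weakly wandering with infinite witness $S\subset G$, then $G$-invariance of $\nu$ on the $G$-invariant set $\mathcal{P}_{1}$ gives $\sum_{s\in S}\nu\left(W\right)=\sum_{s\in S}\nu\left(s.W\right)\le\nu\left(\mathcal{P}_{1}\right)<+\infty$, forcing $\nu\left(W\right)=0$ and hence $\mu\left(W\right)=0$; so $\mathcal{P}_{1}\subset\mathcal{P}$ modulo $\mu$, establishing both existence and uniqueness.

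The main obstacle is verifying that the Hajian--Ito criterion---absence of weakly wandering sets implies the existence of an equivalent {\sc a.c.i.p}---holds in the full generality of an lcsc acting group, since the original theorem is phrased for a single transformation. Since the weakly wandering condition is a purely countable notion, a natural route is to first produce an invariant probability measure under a countable dense subgroup of $G$ (by the classical result), and then upgrade to full $G$-invariance via the continuity of $g\mapsto\nu\left(g.A\right)$ together with the separability of $G$, in the spirit of the argument used in Section \ref{Section: Further Properties} to show $G.T_{\max}\subset\mathcal{D}$ modulo $\mu$. Alternatively, one may invoke the lcsc-group versions developed in \cite{grabarnik1995singular} and \cite[\S4]{avraham2023symmetric}, which are already cited in the preamble to the theorem.
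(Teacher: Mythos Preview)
Your proposal is correct and follows essentially the same route as the paper, which merely records that the theorem ``follows from \cite[Theorem 1]{hajian1969weakly} with the Exhaustion Lemma \ref{Lemma: The Exhaustion Lemma}.'' You have simply filled in the details of that one-line sketch: the hereditary collection of weakly wandering sets, its measurable union via the Exhaustion Lemma, almost $G$-invariance of the union, and identification of the complement with $\mathcal{P}_{1}$ via Hajian--Ito. Regarding your stated obstacle, note that the cited reference \cite{hajian1969weakly} is already formulated for groups of nonsingular transformations rather than a single transformation, so the reduction through a countable dense subgroup you propose, while workable, is not needed here.
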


Toward proving Theorem~\ref{mthm:posnulldeco} we establish some convenient terminology.

\begin{defn}
For an {\sc a.c.i.m} \(\nu\) of a nonsingular \(G\)-space \(\left(X,\mu\right)\) define:
\begin{description}
    \item[\(\bullet\) support] A Borel set \(P\subseteq X\) with \(\nu\mid_{P}=\nu\) and \(\nu\mid_{X\backslash P}=0\).
    \item[\(\bullet\) compatible support] A \(G\)-invariant support \(P\) with \(\mu\mid_{P}\sim\nu\mid_{P}\).
    \item[\(\bullet\) maximal compatible support] a compatible support that contains, modulo \(\mu\), the compatible support of any other {\sc a.c.i.m}.
    \item[\(\bullet\) \(1\)-maximal compatible support] a compatible support that contains, modulo \(\mu\), the compatible support of any other {\sc a.c.i.p}.
\end{description}
\end{defn}

\begin{lem}
\label{lem:compsupp}
Every {\sc a.c.i.m} admits a compatible support.
\end{lem}

\begin{proof}
For a given {\sc a.c.i.m} \(\nu\), pick a version of the Radon--Nikodym derivative \(d\nu/d\mu\in L^{1}\left(X,\mu\right)\), i.e. a function \(\phi:X\to\mathbb{R}\) with \(\phi=d\nu/d\mu\) \(\mu\)-a.e. and set
\[P\coloneqq \left\{x\in X:\phi\left(x\right)>0\right\}.\]
Clearly \(P\) supports \(\nu\). We also have that \(\mu\mid_{P}\sim\nu\mid_{P}\): clearly \(\nu\mid_{P}\ll\mu\mid_{P}\), and also \(\mu\mid_{P}\ll\nu\mid_{P}\) since if \(\nu\left(E\right)=0\) then
\[\mu\left(E\cap P\right)=\int_{E\cap P}1/\phi d\nu=0.\]
We show that \(P\) is \(\mu\)-almost \(G\)-invariant. Passing to a probability measure equivalent to \(\mu\) if necessary, we may assume that \(\mu\) is a probability measure. Let \(g\in G\) be arbitrary. On one hand we have
\[\mu\left(g.P\cap P\right)=\int_{g.P}1_{P}\cdot1/\phi d\nu=\int_{g.P}1/\phi d\nu=\mu\left(g.P\right),\]
implying that \(\mu\left(g.P\backslash P\right)=0\). On other other hand, since \(\nu\) is \(G\)-invariant we have
\[\nu\left(P\backslash g.P\right)=\nu\left(g^{-1}.P\backslash P\right)=0,\]
and using that \(\mu\mid_{P}\sim\nu\mid_{P}\) we deduce that \(\mu\left(P\backslash g.P\right)=0\). All together we deduce that \(\mu\left(g.P\triangle P\right)=0\), and by Proposition~\ref{prop:invariance}(1) and using that \(\nu\ll\mu\) we may assume that \(P\) is \(G\)-invariant.
\end{proof}

\begin{lem}
\label{lem:maxacip}
Unless an {\sc a.c.i.m} does not exist, there exists an {\sc a.c.i.m} (which is possibly an {\sc a.c.i.p}) with a maximal compatible support.\\
Similarly, unless {\sc a.c.i.p} does not exist, there exists an {\sc a.c.i.p} with a \(1\)-maximal compatible support.
\end{lem}

The following is a proof of the first statement of Lemma~\ref{lem:maxacip}, with the proof of the second statement following similarly, requiring only minor modifications.

\begin{proof}
Passing to a probability measure equivalent to \(\mu\) if necessary, we may assume that \(\mu\) is a probability measure. Set
\[\alpha\coloneqq \sup\left\{\mu\left(P\right):P\text{ is a compatible support of an {\sc a.c.i.m} of }\left(X,\mu\right)\right\}.\] 
Choose a sequence of {\sc a.c.i.m}'s \(\nu_{1},\nu_{2},\dotsc\) with corresponding compatible supports \(P_{1},P_{2},\dotsc\) such that \(\mu\left(P_{n}\right)\to\alpha\) as \(n\to +\infty\). Observe that finite convex combinations of {\sc a.c.i.m}'s is again an {\sc a.c.i.m}, with the union of the underlying compatible supports being a compatible support for their convex combination, so we may assume that \(P_{1}\subseteq P_{2}\subseteq\dotsm\). Set \(P_{0}=\emptyset\) and define a measure \(\upsilon\) by
\[\upsilon\mid_{P_{n}\backslash P_{n-1}}=\nu_{n}/2^{n+1},\quad n\in\mathbb{N}.\]
Is is easy to see that \(\upsilon\) is \(\sigma\)-finite (possibly finite) and, since \(\nu_{n}\) is \(G\)-invariant and \(P_{n}\) is a \(G\)-invariant set for every \(n\in\mathbb{N}\), we also see that \(\upsilon\) is \(G\)-invariant. Thus, \(\upsilon\) is an {\sc a.c.i.m}. Finally, we claim that
\[\mathcal{P}\coloneqq P_{1}\cup P_{2}\cup\dotsm\]
serves as the desired maximal compatible support; it is clearly a compatible support for \(\upsilon\) and, since \(\mu\left(\mathcal{P}\right)\geq\mu\left(P_{n}\right)\) for every \(n\) necessarily \(\mu\left(\mathcal{P}\right)=\alpha\), so for any compatible support \(P\) of any {\sc a.c.i.m} \(\nu\), the set \(\mathcal{P}\cup P\) forms a compatible support for the {\sc a.c.i.m} \(\upsilon/2+\nu/2\), so that \(\alpha=\mu\left(P_{0}\right)\leq\mu\left(P_{0}\cup P\right)\leq\alpha\) and hence \(\mu\left(P\backslash P_{0}\right)=0\), thus \(\mathcal{P}\) is maximal.
\end{proof}

\begin{proof}[Proof of Theorem~\ref{mthm:posnulldeco}]
If there is no {\sc a.c.i.m} we put \(\mathcal{N}\coloneqq X\). Otherwise, using the first statement of Lemma~\ref{lem:maxacip}, fix an {\sc a.c.i.m} \(\upsilon\) with a maximal compatible support \(\mathcal{P}\), let \(\mathcal{N}\coloneqq X\backslash\mathcal{P}\) and look at the \(G\)-invariant decomposition
\[X=\mathcal{P}\sqcup\mathcal{N}.\]
Since \(\mathcal{P}\) is a compatible support, \(\upsilon\mid_{\mathcal{P}}\sim\mu\mid_{\mathcal{P}}\). From the maximality of \(\mathcal{P}\), the compatible support of every {\sc a.c.i.m}, which always exists by Lemma~\ref{lem:compsupp}, must be contained in \(\mathcal{P}\) modulo \(\mu\), hence \(\mathcal{N}\) supports no {\sc a.c.i.m}. This establishes the property (3) in the theorem for the set \(\mathcal{N}\).

We now restrict our attention to the nonsingular \(G\)-space \(\left(\mathcal{P},\mu\mid_{\mathcal{P}}\right)\). If there is no {\sc a.c.i.p} of \(\left(\mathcal{P},\mu\mid_{\mathcal{P}}\right)\) we put \(\mathcal{P}_{\infty}=\mathcal{P}\). Otherwise, using the second statement of Lemma~\ref{lem:maxacip}, fix an {\sc a.c.i.p} \(\varrho\) with a \(1\)-maximal compatible support \(\mathcal{P}_{1}\), let \(\mathcal{P}_{\infty}=\mathcal{P}\backslash\mathcal{P}_{1}\) and look at the \(G\)-invariant decomposition
\[\mathcal{P}=\mathcal{P}_{1}\sqcup\mathcal{P}_{\infty}.\]
The very same reasoning as in the first part of this proof establishes properties (1) and (2) in the theorem for the sets \(\mathcal{P}_{1}\) and \(\mathcal{P}_{\infty}\).

Finally, the maximality of \(\mathcal{P}\) as the compatible support of the {\sc a.c.i.m} \(\upsilon\) together with the \(1\)-maximality of \(\mathcal{P}_{1}\) as the compatible support of the {\sc a.c.i.p} \(\varrho\), imply the uniqueness of the decomposition.
\end{proof}

\section{Concluding remarks}

\subsubsection{Shelah--Weiss characterization of transience}

Shelah and Weiss \cite{shelah1982measurable} showed that for an aperiodic Borel automorphism \(S\) of a standard Borel space \(X\), every Borel set \(A\subseteq X\) that satisfies \(\mu\left(A\right)=0\) for every \(S\)-nonsingular \(S\)-ergodic measure \(\mu\) on \(X\), is contained in \(\bigcup_{n\in\mathbb{Z}}S^{n}\left(W\right)\) for some wandering set \(W\subseteq X\) (note that the conclusion is measure-free). We ask for the following continuous analog:

\begin{que}
Let \(G\) be an lcsc non-compact group with a left Haar measure \(\lambda\) and \(X\) a free Borel \(G\)-space. Suppose \(A\subseteq X\) is a Borel set with \(\lambda\left(R_{A}\left(x\right)\right)>0\) for every \(x\in G.A\), such that \(\mu\left(A\right)=0\) for every quasi-invariant properly ergodic measure \(\mu\) on \(X\). Is it true that \(A\subseteq G.T\) for some transient set \(T\subseteq X\)?
\end{que}

Note that there is always a lacunary cross-section \(C\), that is a Borel set \(C\subset X\) satisfying \(G.C=X\) and there is an identity neighborhood \(U\subset G\) such that \(U.x\cap U.y=\emptyset\) for all distinct \(x,y\in C\) (see e.g. \cite[\S2]{slutsky2017lebesgue} and the references therein). Therefore, the assumption \(\lambda\left(R_{A}\left(x\right)\right)>0\) whenever \(x\in G.A\) is necessary.

\subsubsection{First cohomology, skew products, and Maharam extensions}

Schmidt’s monograph~\cite{schmidt1977cocycles} offers a comprehensive study of the cohomology of nonsingular \(G\)-spaces for countable groups \(G\) (see also~\cite[\S8]{aaronson1997introduction}). Although we do not develop this theory here, we believe it should be formulated and established in full generality. This would involve properly defining the first cohomology~\cite[\S2]{schmidt1977cocycles}, essential values~\cite[\S3]{schmidt1977cocycles}, and recurrence of cocycles~\cite[Definition~3.13]{schmidt1977cocycles}. This last notion should correspond to the conservativity of the skew-product defined by a cocycle~\cite[\S5]{schmidt1977cocycles}, and thus, by the Maharam's theorem~\ref{mthm:maharam}, the Radon--Nikodym cocycle of every conservative nonsingular \(G\)-space is recurrent~\cite[\S4]{schmidt1977cocycles}.

\section*{Appendix: measures on homogeneous spaces}
\label{Appendix: Measures on Homogeneous Spaces}

The construction of measures on homogeneous spaces was established by Mackey \cite[\S1]{mackey1952induced} and extended later by many others, and we recall its basics here in a way that suits our needs. We will follow Mackey's seminal works \cite{mackey1952induced, mackey1957borel} and the presentation in \cite[\S2.6]{folland1995course} (see also \cite[Chapter V, \S4]{varadarajan1968geometry}).

A {\bf group pair} \(C\lessdot G\) is a pair of an lcsc group \(G\) and a closed subgroup \(C\) of \(G\). Denote by \(G/C\) the coset space \(\left\{gC:g\in G\right\}\) with the quotient topology it inherits from \(G\). The group \(G\) acts continuously and transitively on \(G/C\) via
\[g.hC=ghC,\quad g\in G,\, hC\in G/C.\]
We will refer to measures on \(G/C\) simply as {\it invariant} or {\it quasi-invariant}, with the understanding that this always means invariance or quasi-invariance with respect to the natural action of \(G\) on \(G/C\). Since this action is transitive, it becomes ergodic with respect to whatever invariant or quasi-invariant measure we may put on \(G/C\).

\begin{thm}[Mackey]
For every group pair \(C\lessdot G\), the coset space \(G/C\) becomes a standard Borel space. Equivalently, with the natural action of \(C\) on \(G\), the orbit equivalent relation \(E_{C}^{G}\) is smooth.
\end{thm}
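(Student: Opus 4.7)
The plan is to prove that the orbit equivalence relation $E_{C}^{G}$ of $C$ acting on $G$ (by right translation, so that the orbits are precisely the left cosets $gC$) is smooth; the identification of $G/C$ as a standard Borel space then follows from Theorem~\ref{Theorem: smooth}, specifically the equivalence (1)$\iff$(5). To this end, I will construct a Borel selector $s : G \to G$ for $E_{C}^{G}$.

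First, I examine the coset map
$$\psi : G \to \mathbf{F}^{\ast}\left(G\right), \qquad \psi(g) := gC.$$
Since $C$ is closed in $G$, each $gC$ is a nonempty closed subset of $G$, so $\psi$ is well-defined into $\mathbf{F}^{\ast}\left(G\right)=\mathbf{F}\left(G\right)\backslash\left\{\emptyset\right\}$. I claim that $\psi$ is continuous with respect to the Chabauty--Fell topology; this is a direct verification on the Fell subbasis. For an open set $U\subset G$, one has $\psi^{-1}\left\{F:F\cap U\neq\emptyset\right\}=UC$, which is open as a union of translates of the open set $U$. For a compact set $K\subset G$, one has $\psi^{-1}\left\{F:F\cap K=\emptyset\right\}=G\backslash KC$, and $KC$ is closed because it is the image of the compact-by-closed pair $K\times C$ under multiplication (a standard consequence of compactness of $K$). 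In particular, $\psi$ is Borel.

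Second, I invoke the Kuratowski--Ryll-Nardzewski selection theorem (exactly as used in the proof of Lemma~\ref{Lemma: group selector}) to obtain a Borel map $\xi : \mathbf{F}^{\ast}\left(G\right) \to G$ with $\xi(F) \in F$ for every $F \in \mathbf{F}^{\ast}\left(G\right)$, and set $s := \xi \circ \psi$. This composition is Borel. By construction $s\left(g\right)\in gC$ for all $g\in G$, so $\left(g,s\left(g\right)\right)\in E_{C}^{G}$; and the value $s\left(g\right)=\xi\left(gC\right)$ depends on $g$ only through the coset $gC$, so $s\left(g\right)=s\left(g'\right)$ whenever $gC=g'C$. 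Conversely, if $s\left(g\right)=s\left(g'\right)$, then $s(g)\in gC$ forces $s\left(g\right)C=gC$ and similarly $s(g')C=g'C$, hence $gC=g'C$. Thus $s$ is a Borel selector for $E_{C}^{G}$ in the sense of Theorem~\ref{Theorem: smooth}(3), so $E_{C}^{G}$ is smooth, and consequently $G/C=G/E_{C}^{G}$ is standard Borel.

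The only mildly technical point is the continuity verification for $\psi$, which reduces to the single observation that $KC$ is closed in $G$ whenever $K$ is compact and $C$ is closed. I expect this to be the main (and quite minor) obstacle; everything else in the argument is essentially formal once the Kuratowski--Ryll-Nardzewski selection theorem and Theorem~\ref{Theorem: smooth} are in hand.
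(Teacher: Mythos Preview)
Your argument is correct. The map $\psi:g\mapsto gC$ is indeed continuous for the Fell topology (your two subbasis computations are right, using $C^{-1}=C$ and the standard fact that $KC$ is closed for $K$ compact, $C$ closed), and composing with a Kuratowski--Ryll-Nardzewski selector yields a Borel selector for $E_{C}^{G}$; Theorem~\ref{Theorem: smooth} then finishes.

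For comparison: the paper does not actually prove this theorem but only cites Mackey's original papers and standard references (Kechris, Gao, Zimmer). The route most of those references take---and the one the paper alludes to in the sentence immediately following the statement---is to show directly that the \emph{quotient topology} on $G/C$ is Polish, which is a strictly stronger conclusion than standard Borel and is what is used in the proof of Theorem~\ref{Theorem: compactly fibered are smooth}. Your approach is different and in a sense more in the spirit of the paper's internal toolkit: you bypass the topological question entirely and work at the Borel level, exploiting exactly the same Kuratowski--Ryll-Nardzewski selection argument that the paper itself uses in Lemma~\ref{Lemma: group selector}. What your approach buys is a self-contained proof using only ingredients already present in the paper; what the quotient-topology approach buys is the stronger Polish conclusion, which is needed elsewhere.
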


This theorem was proved by Mackey in two different formulations in \cite[Lemma 1.1]{mackey1952induced} and \cite[Theorem 7.2]{mackey1957borel}, which are equivalent due to Theorem~\ref{thm:smooth}. See also \cite[Corollary A.8]{zimmer2013ergodic}. In fact, it is true also beyond lcsc groups (see \cite[Theorem (12.17)]{kechris2012descriptive} and \cite[Theorem 2.2.10]{gao2008invariant}; cf. \cite[Proposition 3.4.6]{gao2008invariant}). A useful fact in proving the following extension of Mackey's theorem is that the quotient topology itself is Polish, making \(G/C\) a Polish \(G\)-space.

\begin{thm}
\label{thm:compstabsmooth}
Suppose \(C\lessdot G\) is a group pair with \(C\) compact, and let \(H\) be a closed subgroup of \(G\). Consider the natural action of \(H\) on \(G/C\). Then the orbit equivalence relation \(E_{H}^{G/C}\) is smooth.
\end{thm}

\begin{proof}
Since \(H\) is closed its action on \(G/C\) is continuous in the quotient topology, which is a Polish topology by \cite[Theorem 2.2.10]{gao2008invariant}. The orbits in this action are
\[H_{gC}=HgC,\quad gC\in G/C,\]
and, since \(H\) is closed and \(C\) is compact, we deduce that all orbits are closed. Finally, a Borel equivalence relation with closed equivalence classes is smooth (see \cite[Theorem (12.16)]{kechris2012descriptive}, \cite[Theorem 6.4.4(iv)]{gao2008invariant}, \cite[Proposition 2.1.12]{zimmer2013ergodic}).
\end{proof}

The construction of {\it measures on homogeneous spaces} yields a quasi-invariant Radon measure on \(G/C\). In the following we describe it as well as \emph{Weil's formula} it satisfies. Given a group pair \(C\lessdot G\), treating each of \(G\) and \(C\) as an lcsc group on its own right, we denote by \(\lambda_{G}\) and \(\lambda_{C}\) their respective left Haar measures and by \(\varDelta_{G}\) and \(\varDelta_{C}\) the corresponding modular functions. Every group pair \(C\lessdot G\) admits a {\bf rho-function}, i.e. a continuous function \(\rho:G\to\mathbb{R}_{>0}\) with
\[\rho\left(gc\right)=\frac{\varDelta_{C}\left(c\right)}{\varDelta_{G}\left(c\right)}\cdot\rho\left(g\right),\quad \left(g,c\right)\in G\times C.\]
(See \cite[Proposition 2.54]{folland1995course}). The following theorem is classical and in many occasions is formulated for locally compact groups that may not be second countable. We focus solely on lcsc groups.

\begin{thm}[Measures on homogeneous spaces]
\label{thm:meashom}
For a group pair \(C\lessdot G\):
\begin{enumerate}
    \item There is a unique, up to a positive constant multiple, invariant \(\sigma\)-finite Radon measure \(\kappa\) on \(G/C\) iff \(\varDelta_{G}\mid_{C}\equiv\varDelta_{C}\).
    \item There is always a unique, up to measure class, quasi-invariant \(\sigma\)-finite Radon measure \(\kappa\) on \(G/C\), with continuous Radon--Nikodym cocycle.
    \item \emph{Weil's formula}: For every \(\kappa\) as in (2) there is a rho-function \(\rho\) with
    \begin{equation}
    \begin{aligned}
    \label{eq:weil1}
    \int_{G}\varphi\left(g\right)d\lambda_{G}\left(g\right)=\int_{G/C}\Big[\int_{C}\varphi\left(gc\right){\textstyle \frac{\varDelta_{G}\left(c\right)}{\varDelta_{C}\left(c\right)}}d\lambda_{C}\left(c\right)\Big]\rho\left(g\right)^{-1}d\kappa\left(gC\right),
    \end{aligned}
    \end{equation}
    \[\text{ for every Borel function } \varphi:G\to\mathbb{R}_{\geq 0}.\]
\end{enumerate}
\end{thm}

\begin{proof}
Since \(G\) is assumed to be \(\sigma\)-compact, so is its continuous image \(G/C\). Thus, every Radon measure on \(G/C\) would be automatically \(\sigma\)-finite. Part (1) can be found in \cite[Theorem (2.49)]{folland1995course}. Part (2) can be found in \cite[Theorem (2.59)]{folland1995course}. As for part (3), in \cite[Theorem (2.56)]{folland1995course} there can be found the formula
\[\int_{G}\varphi\left(g\right)\rho\left(g\right)d\lambda_{G}\left(g\right)=\int_{G/C}\Big[\int_{C}\varphi\left(gc\right)d\lambda_{C}\left(c\right)\Big]d\kappa\left(gC\right)\]
for every Borel function \(\varphi:G\to\mathbb{R}_{\geq 0}\) (see \cite[\S2.7, pp. 64--65]{folland1995course}). Finally, for every Borel function \(\varphi:G\to\mathbb{R}_{\geq 0}\), substituting \(\varphi/\rho\) for \(\varphi\) in this formula and using the rho-function property, we obtain the formula in part (3). 
\end{proof}

The uniqueness of \(\kappa\) as stated in Theorem~\ref{thm:meashom}(2) is restricted to the class of Radon measures with continuous Radon--Nikodym cocycle. Also the famous uniqueness of the Haar measure is formulated in the common literature under further regularity property (e.g. Radon). We will need to strengthen this uniqueness.

It was shown by Mackey that quasi-invariance is sufficient to determine the measure class in the construction of measures on homogeneous spaces (a fact that is already significant when \(C\) is the trivial subgroup). Since every \(\sigma\)-finite measure is equivalent to a probability measure, which is always Radon, Mackey's theorem renders any additional regularity assumptions redundant. We formulate this for compact \(C\), where \(\varDelta_{G}\mid_{C}\equiv\varDelta\mid_{C}\) hence \(\kappa\) is invariant  according to Theorem~\ref{thm:meashom}(1).

\begin{thm}[Mackey]
\label{thm:haaruniq}
For a group pair \(C\lessdot G\) with \(C\) compact, with the invariant measure \(\kappa\) constructed in Theorem~\ref{thm:meashom}(1), the following hold:
\begin{enumerate}
    \item Every quasi-invariant \(\sigma\)-finite Borel measure on \(G/C\) is equivalent to \(\kappa\).
    \item Every invariant \(\sigma\)-finite Borel measure on on \(G/C\) is a positive constant multiple of \(\kappa\).
    \item The pushforward of \(\lambda_{G}\) along the canonical projection \(G\to G/C\) is a positive constant multiple of \(\kappa\).
\end{enumerate}
\end{thm}

\begin{proof}
Part (1). This is \cite[Theorem 1.1]{mackey1952induced} (cf. \cite[Theorem 7.1 and its Corollary]{mackey1957borel}).

Part (2). If \(\kappa'\) is any invariant \(\sigma\)-finite Borel measure equivalent to \(\kappa\), then \(d\kappa'/d\kappa\) is an invariant function on \(G/C\). Since the action is transitive and \(d\kappa'/d\kappa\) is an invariant function, it is necessarily a positive constant.

Part (3). Let \(q:G\to G/C\) be the canonical projection, which is continuous and open map, and consider the pushforward measure \(q_{\ast}\lambda_{G}\) on \(G/C\). This is a Borel measure and it is invariant since
\[q^{-1}\left(g.B\right)=gq^{-1}\left(B\right)\text{ for every Borel set }B\subseteq G/C\text{ and }g\in G.\]
Then with Part (2), in order to finish we have left to show that \(q_{\ast}\lambda_{G}\) is \(\sigma\)-finite (which fails when \(C\) is non-compact: \(G=\mathbb{R}^{2}\) and \(C=\mathbb{R}\)). For an arbitrary compact set \(K\subset G\) we have \(q^{-1}\left(K.C\right)\subseteq K\cdot C\) and then, since \(K\cdot C\) is compact,
\[q_{\ast}\lambda_{G}\left(K.C\right)\leq\lambda_{G}\left(K\cdot C\right)<+\infty.\]
As \(K\) is arbitrary and \(G\) is \(\sigma\)-compact, it follows that \(q_{\ast}\lambda_{G}\) is \(\sigma\)-finite.
\end{proof}

\subsection*{Acknowledgments}

We extend our heartfelt thanks to Michael Bj\"{o}rklund for several illuminating discussions and for his Remark~\ref{rmk:michael}. We also thank Sasha Danilenko for his advice, for answering our questions, and for pointing us to the reference~\cite{feldman1978orbit}. We are deeply grateful to Zemer Kosloff, who initiated this project, for his guidance, dedicated mentorship, and invaluable support. Finally, we thank Johanna Steinmeyer for her assistance in creating the figures.

\bibliographystyle{acm}
\bibliography{References}

\end{document}